\crefname{hypothesis}{Hypothesis}{Hypotheses}
\title{Bilinear quadratic output systems and balanced truncation\thanks{Submitted to the editors DATE.
\funding{Part of this work was completed during a research stay of the third author at Virginia Tech supported by a fellowship of the German Academic Exchange Service (DAAD). The work of Gugercin was supported in part by the National Science Foundation (NSF), United States under Grant No.
CMMI-2130695}}}
\author{Heike Fa\ss bender \thanks{Institute for Numerical Analysis, TU Braunschweig
  (\email{h.fassbender@tu-braunschweig.de}).} 
\and Serkan Gugercin\thanks{Department of Mathematics and Division of Computational Modeling and Data Analytics, Academy of Data Science, Virginia Tech
  (\email{gugercin@vt.edu}).}\and Till Peters\thanks{Institute for Numerical Analysis, TU Braunschweig 
  (\email{till.peters@tu-braunschweig.de}), corresponding author.}}
\newcommand{\R}{\mathbb{R}}
\newcommand{\C}{\mathbb{C}}
\newcommand{\norm}[1]{\left\lVert#1\right\rVert}
\begin{document}

\maketitle

% REQUIRED
\begin{abstract}
Dynamical systems with quadratic outputs have recently attracted significant attention. In this paper, we consider bilinear dynamical systems, a special class of weakly nonlinear systems, with a quadratic output. We develop various primal-dual formulations for these systems and define the corresponding system Gramians. Conditions for the existence and uniqueness of these Gramians are established, and the generalized Lyapunov equations they satisfy are derived. Using these Gramians and their truncated versions, 
which are computationally more efficient, we construct a balanced truncation framework for bilinear systems with quadratic outputs. The proposed approach is demonstrated through two numerical examples.    
\end{abstract}

% REQUIRED
\begin{keywords}
    Model order reduction, balanced truncation, bilinear systems, linear quadratic output systems,
    Gramians, generalized Lyapunov equations
\end{keywords}

% REQUIRED
\begin{MSCcodes}
93A15, 93B05, 93B07, 93C10, 93C15 
\end{MSCcodes}

\section{Introduction}
We investigate the so-called bilinear quadratic output (BQO) dynamical systems described in the state-space form as
\begin{subequations}\label{eq:BQO}
    \begin{align}
    \dot{x}(t) &= Ax(t) + \sum_{k=1}^m N_k x(t) u_k(t)+ Bu(t), \qquad x(0)=0, \label{eq:BQO_a1}\\
    y(t)&= Cx(t)+\begin{bmatrix}
        x(t)^TM_1x(t) \\ \vdots \\ x(t)^TM_px(t)
    \end{bmatrix}, \label{eq:BQO_a2}
\end{align}
\end{subequations}
where $A\in\R^{n\times n}, B\in\R^{n \times m}, C\in \R^{p\times n}, N_k\in\R^{n\times n}$ for $k=1,\dots, m$, $M_j\in \R^{n\times n}$ for $j=1,\dots, p$, and $t\in T:=[0,\infty)$.  In~\eqref{eq:BQO}, $x(t)\in\R^n$ describes the state, $u(t)\in \R^m$ the inputs, and $y(t)\in \R^p$ the outputs of the system. It can be assumed without loss of generality that $M_j$ is symmetric since $x(t)^TM_jx(t)=x(t)^TM_j^Tx(t)=\frac{1}{2}x(t)^T(M_j+M_j^T)x(t)$.

The BQO systems represent a generalization of bilinear systems (see, e.g., \cite{al-baiyat_new_1993,zhang_2002}) as well as of linear quadratic output (LQO) systems (see, e.g., \cite{benner_lqo_2022}). More specifically,
in \cref{eq:BQO_a2}, setting $M_1=\dots=M_p=0$ leads to the output $y(t) = Cx(t)$, recovering the bilinear dynamical systems. On the other hand,
setting $N_1=\dots=N_m=0$ in \cref{eq:BQO_a1} leads to the state equation  $\dot{x}(t) = Ax(t)+Bu(t)$, 
recovering the linear quadratic output (LQO) systems.

Bilinear state equations naturally appear in various applications or a result of Carleman bilinearization of nonlinear dynamics, see, e.g., \cite{rugh_nonlinear_1981,mohler,hart13}. In recent years, dynamical systems with quadratic outputs as in \cref{eq:BQO_a2} have gained significant attention as they appear in the modeling of the variance of a quantity of interest of a stochastic model or in measuring the energy of state; see, e.g., \cite{PulA19,Pul23,YueM13,ReiW24,VanVNLM12}
and the references therein.
Moreover, in the context of port-Hamiltonian dynamical systems,  bilinear port-Hamiltonian dynamical systems
naturally appear with a state-space form
\begin{subequations}\label{eq:BPH} 
\begin{align}
    \dot{x}(t) &= (J-R)Qx(t) + \left(B+Nx(t)\right)u(t),\\
    y(t) &= \left(B+Nx(t)\right)^TQx(t),
\end{align}
\end{subequations}
where $J,R,Q,N\in\R^{n \times n}, B\in\R^{n\times 1}$ with $J=-J^T, R=R^T\geq 0, Q=Q^T\geq 0$, see~\cite{MehU23,Van06}. (For simplicity, here we only considered a single-input single-output bilinear port-Hamiltonian system.) Note that \cref{eq:BPH} is a special case of \cref{eq:BQO}. 

In most cases, such as those where the dynamical system arises from the spatial discretization of an underlying PDE, the dimension of the resulting BQO system is significantly large, and the task of model order reduction arises.
Thus we seek a BQO system of similar structure but  with a significantly smaller dimension which should accurately describe the same input-output behavior. More precisely, we look for the reduced BQO system 
of order $r \ll n$ described as
\begin{subequations}\label{eq:BQOr}
    \begin{align}
    \dot{\hat{x}}(t) &= \hat{A}\hat{x}(t) + \sum_{k=1}^m \hat{N}_k \hat{x}(t) u_k(t)+ \hat{B}u(t), \qquad \hat{x}(0)=0, \label{eq:BQOr_a1}\\
    \hat{y}(t)&=\hat{C}\hat{x}(t)+\begin{bmatrix}
        \hat{x}(t)^T\hat{M}_1\hat{x}(t) \\ \vdots \\ \hat{x}(t)^T\hat{M}_p\hat{x}(t)
    \end{bmatrix}, \label{eq:BQOr_a2}
\end{align}
\end{subequations}
where $\hat{A}\in\R^{r\times r}, \hat{B}\in\R^{r \times m}, \hat{C}\in \R^{p\times r}$, $\hat{N}_k\in\R^{r\times r}$ for $k=1,\dots, m$, $\hat{M}_j\in \R^{r\times r}$ for $j=1,\dots, p$, $t\in T$. In \cref{eq:BQOr}, $\hat{x}(t)\in\R^r$ describes the reduced state and $\hat{y}(t)\in \R^p$ the reduced output of the system. The goal is to construct \cref{eq:BQOr} such that
the reduced output $\hat{y}(t)$ provides a high-fidelity approximation to the original output $y(t)$.

In this manuscript, we employ \emph{balanced truncation} (BT), a model order reduction method that is well established for linear systems (originally in \cite{moore_bt,mullis_roberts}, see also \cite{antoulas_book_2005,BenB17,BreS21,GugA04}), or \cite[Section 7 and 8]{benner_modellreduktion_2024}). BT transforms the original system into a state-space representation where the reachability and observability Gramians are equal and diagonal. The diagonal elements are called the Hankel singular values. The states corresponding to the smallest Hankel singular values are then removed because they have the least influence on the system's behavior. In general, this results in a reduced model that provides a good approximation of the original system with a lower dimension. Theoretical foundation of balancing and balanced truncation for nonlinear systems has been introduced in \cite{scherpen_balancing_1993} which requires solving Hamilton-Jacobi partial differential equations.
This task is computationally challenging for large-scale systems and the reduced system, in general, will not inherit the original structure due to the state-dependent model reduction bases. For example, in our setting, this means that the reduced system will not retain the structure~\cref{eq:BQOr}. As an alternative, one seeks algebraic Gramians, which are generalizations of the linear case and solve generalized Lyapunov equations.  One of the main contributions of this work will be the development of proper and relevant algebraic Gramians for BQO systems.

BQO systems and their model order reduction have been briefly discussed in the master's thesis \cite[pp. 25–42]{padhi_2024}. In particular, \cite[Section 3.1]{padhi_2024} highlights a transformation of BQO systems into bilinear systems with linear output via lifting transformations. While this approach enables the application of existing bilinear system theory and reduction techniques, it comes at the significant cost of increasing the system matrix size from $n \times n$ to $(n+n^2) \times (n+n^2)$ - an increase in the dimension that is clearly impractical for large-scale systems. Therefore, instead, \cite{padhi_2024} avoids this lifting technique and aims to develop reduction tools directly on the original BQO structure. To achieve this goal in the BT setting,
one needs the Gramians.  In \cite{padhi_2024}, the reachability Gramian of \eqref{eq:BQO} is taken to be the same as in the bilinear case, 
that is, \eqref{eq:BQO} with $M_1=\dots=M_p=0$. Similarly, the construction of the observability Gramian of \eqref{eq:BQO} is motivated by the observability Gramian for the bilinear case. 
The analysis in \cite{padhi_2024} is restricted to single-input single-output (SISO) configurations ($m = p = 1$).

In contrast, our work addresses the more general case of multi-input multi-output (MIMO) BQO systems and focuses on developing the reachability and observability Gramians following the approach for quadratic bilinear systems from \cite{BenG24}. In quadratic bilinear systems, \eqref{eq:BQO_a1} has an additional quadratic summand $H(x(t) \otimes x(t)),$ while in \eqref{eq:BQO_a2} all matrices $M_j, j = 1, \dots , p$ are zero. Therefore, with $H = 0$, the derivation of the reachability Gramian given in \cite{BenG24} can be adopted here. 

In \cite[Section 2.2]{BenG24}, it is suggested to define the observability Gramian for quadratic bilinear systems as the reachability Gramian of \emph{the dual system}, analogous to what is known for linear and bilinear systems. We will follow this approach in our derivations. As we will show in \Cref{subsec:3-2},
there are multiple ways to write the BQO system \eqref{eq:BQO} as a primal system.
This allows us to introduce multiple variants of the observability Gramian, each satisfying a different (generalized) Lyapunov equation. 
Interestingly, although the observability Gramian in \cite{padhi_2024} is derived using a different approach than the one considered here, 
it satisfies the same Lyapunov equation as one of the observability Gramians derived here. As we will show, since this equation has a unique symmetric positive semidefinite solution under certain conditions, the two Gramians must be identical. 

The structure of the paper is as follows: Section \ref{sec:2} introduces preliminaries. In Section \ref{sec:3}, we derive the reachability and observability Gramians for BQO systems. Section \ref{subsec:3-1} summarizes the derivation of the reachability Gramian for quadratic bilinear systems from \cite{BenG24} adapted to the case of BQO systems. In Section \ref{subsec:3-2}, the primal and the dual systems for the BQO system \eqref{eq:BQO} are discussed, showing that multiple  
 ways exist to write the primal system, which leads to different dual systems. The corresponding observability Gramians are derived in Sections \ref{subsubsec:3-2}, \ref{subsubsec:3-3}, {and \ref{subsubsec:3-4}}. 
Section \ref{sec:5} provides the theoretical foundations, including existence and uniqueness results for the Gramians and their Lyapunov equations. In Section \ref{sec:6}, we discuss Gramian truncation for code acceleration and analyze the resulting Lyapunov equations. Section \ref{sec:7} applies balanced truncation to BQO systems, comparing the interpretability of different observability Gramians and introducing a tailored algorithm. Finally, Section \ref{sec:8} presents numerical results using various Gramian combinations.

\section{Preliminaries}\label{sec:2}

We will use the standard Householder notation labeling matrices with upper-case Roman letters ($A, B,$ etc.), vectors with lower-case Roman letters ($a, b,$ etc.), and scalars with lower-case Greek letters ($\alpha, \beta$, etc.). When exposing columns or rows of a matrix, the columns are usually labeled with the corresponding Roman lower case letter, indexed with an integer $i,j,k, m$ or $p$.

Throughout this paper, we will use the spectral norm $\norm{\cdot}=\norm{\cdot}_2$. We write $\sigma(T) \subset \mathbb{C}$  for the spectrum of a linear operator $T$ and $\rho(T) = \max\{|\lambda| \mid \lambda \in \sigma(T)\}$ for the spectral radius.
We use $X \geq 0$ to denote a symmetric positive semidefinite matrix $X \in \mathbb{R}^{n \times n},$ while $X > 0$ denotes a symmetric positive definite matrix $X$.
  Furthermore, we say that $A$ is stable if all eigenvalues of $A$ lie in the left half-plane.

Moreover, we will use the standard Kronecker product notation as defined in, e.g., \cite[Section 4.2,4.3]{HorJ94} or \cite[pp.27-28]{GolubVanLoan4th}). In particular, we will use the fact that
\[(A\otimes B)(C \otimes D) = AB \otimes CD\]
holds for matrices of appropriate size, and
$
\norm{I \otimes X} = \norm{X}$.
In addition, we will use the following result:
\begin{lemma} \label{lem:lem1}
    Let $R\in\R^{n\times n}$ and 
    $\mathcal{G}:=\begin{bmatrix}
        G_1 \dots G_q 
    \end{bmatrix}, \mathcal{H}:=\begin{bmatrix}
        H_1^T  \dots  H_q^T 
    \end{bmatrix}^T$,
    with $G_k,H_k\in\R^{n\times n}$ for all $k=1,\dots, q$. Then, it holds
    \begin{align*}
        \mathcal{G}(I_q\otimes R)\mathcal{H}=\sum_{k=1}^qG_kRH_k.
    \end{align*}
\end{lemma}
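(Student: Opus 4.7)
The plan is to prove the identity by a direct computation that exploits the block-diagonal structure of $I_q\otimes R$. By the definition of the Kronecker product,
\begin{equation*}
I_q\otimes R \;=\; \begin{bmatrix} R & & \\ & \ddots & \\ & & R \end{bmatrix} \in \R^{qn\times qn},
\end{equation*}
that is, a block-diagonal matrix with $q$ copies of $R$ on the diagonal. The matrices $\mathcal{G}$ and $\mathcal{H}$ are likewise given in conformable $n\times n$ block form.

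First I would multiply $\mathcal{G}$ by $I_q\otimes R$ from the right. Since block-row times block-diagonal just scales each block-column, this yields
\begin{equation*}
\mathcal{G}(I_q\otimes R) \;=\; \begin{bmatrix} G_1 R & G_2 R & \cdots & G_q R \end{bmatrix}.
\end{equation*}
Then, multiplying by the block-column $\mathcal{H}$ whose $k$-th block is $H_k$, the standard block-product rule gives
\begin{equation*}
\mathcal{G}(I_q\otimes R)\mathcal{H} \;=\; \sum_{k=1}^{q} G_k R H_k,
\end{equation*}
which is the desired identity.

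There is no real obstacle here: the statement is essentially a restatement of the definition of the Kronecker product $I_q\otimes R$ combined with block matrix multiplication. The only mild bookkeeping point is to make sure the block sizes are compatible (all blocks are $n\times n$, so $\mathcal{G}\in\R^{n\times qn}$, $I_q\otimes R\in\R^{qn\times qn}$, and $\mathcal{H}\in\R^{qn\times n}$, making the product a well-defined $n\times n$ matrix).
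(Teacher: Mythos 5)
Your proof is correct: the identity follows exactly as you say from the block-diagonal form of $I_q\otimes R$ together with standard block matrix multiplication, and your dimension check confirms the product is well defined. The paper states this lemma without proof, evidently regarding it as immediate, and your direct computation is precisely the argument one would supply.
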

The structure from \Cref{lem:lem1} will often appear and be exploited later; so given 
$N_k$ and $M_j$ appearing in the BQO system dynamics \cref{eq:BQO}, we define the matrices
\begin{subequations}\label{eq:mathcal}
    \begin{align}
    \mathcal{N}:=&\begin{bmatrix}
        N_1 & \dots & N_m
    \end{bmatrix}\in \R^{n\times nm}, \label{eq:mathcalN}\\
    \mathcal{N}_T:=&\begin{bmatrix}
        N_1^T & \dots & N_m^T
    \end{bmatrix}\in \R^{n\times nm},\label{eq:mathcalNt}\\
    \mathcal{M}:=&\begin{bmatrix}
        M_1 & \dots & M_p
    \end{bmatrix}\in \R^{n\times np}.\label{eq:mathcalM}
\end{align}

\end{subequations}

\section{Reachability and Observability Gramians} \label{sec:3}
We seek to apply model reduction to BQO systems using balanced truncation, that is, by balancing
the system and then truncating the unimportant states afterward. This will require properly defining the Gramians. Toward this goal, we will define and use generalized Gramians. 
Because BQO systems \eqref{eq:BQO} are a generalization of bilinear systems and of linear quadratic output systems, the Gramians derived here 
will cover the Gramians of those systems as a special case.

\subsection{Reachability Gramian}\label{subsec:3-1}
We begin by deriving the reachability Gramian $P$ of the BQO system \cref{eq:BQO}. This has already been discussed in a more general setting in \cite[Section 2.1]{BenG24} in the context of quadratic bilinear systems (see, also, \cite[Section II]{al-baiyat_new_1993}). For quadratic bilinear systems, the state equation \eqref{eq:BQO_a1} has an additional term in the form of $H(x(t)\otimes x(t))$. In the following, we give the equations and definitions from \cite{BenG24} relevant for our further discussion for the case $H= 0$, thus recovering the bilinear dynamics.  
The concept of Volterra series expansion plays the fundamental role in deriving the algebraic reachability Gramian for nonlinear systems. To simplify the notation appearing in these expansions (see below), we define $u^{(t_1,\dots,t_l)}_{k}(t):=u_k(t-t_1-\dots -t_l)$ and $x^{(t_1,\dots,t_l)}(t):= x(t-t_1-\dots -t_l)$,  where $u_k$ denotes the $k$th element of the input vector $u$.

As shown in \cite{BenG24}, the solution of the state equation \cref{eq:BQO_a1} is given by 
\begin{align}
    x(t) = \int_0^te^{At_1}Bu^{(t_1)}(t)dt_1+\sum_{k=1}^m\int_0^t e^{At_1} N_kx^{(t_1)}(t)u^{(t_1)}_{k}(t)dt_1, \label{eq:xt1}
\end{align} 
which can be rewritten as $x(t)=\sum_{i=1}^\infty x_i(t)$ with

  \begin{equation}\label{eq:xi_expr} \small
    x_i(t)    
     =\int_0^t\int_0^{t-t_i}\dots \int_0^{t-t_i-\dots- t_{2}} \bar{P}_i(t_1,\dots,t_i)\left( u^{(t_i)}(t) \otimes \dots \otimes u^{(t_i,\dots,t_1)}(t)\right) dt_1\dots dt_i,
\end{equation}
where, using $\mathcal{N}$ from \cref{eq:mathcal}, the matrices 
$P_i$ are 
\begin{subequations}
\label{eq:Pibar}
\begin{align}
    \bar{P}_1(t_1) &= e^{At_1}B \in \mathbb{R}^{n\times m}, \\
    \bar{P}_2(t_1,t_2) &= e^{At_2}\mathcal{N} (I_m\otimes \bar{P}_1(t_1)) \in \mathbb{R}^{n\times m^2},\\
    &\vdots \notag \\
    \bar{P}_{i}(t_1,\dots, t_i) &= e^{At_i}\mathcal{N} (I_m\otimes \bar{P}_{i-1}(t_1,\dots,t_{i-1}))\in \mathbb{R}^{n\times m^i}.
\end{align}
\end{subequations}
 
Now, we are ready to define the reachability Gramian $P$ for \eqref{eq:BQO}.
\begin{definition}(Reachability Gramian $P$)
The reachability Gramian $P$ for the BQO system \eqref{eq:BQO} is defined as
\begin{align} \label{eq:sumpi}
P=\sum_{i=1}^{\infty}P_i \in\R^{n\times n}
\end{align}
with
\begin{equation}\label{eq:Pi}
    P_i=\int_0^\infty\dots\int_0^\infty \bar{P}_{i}(t_1,\dots, t_i) \bar{P}_{i}^T(t_1,\dots, t_i) dt_1\dots dt_i
\end{equation}
where $\bar{P}_{i}$ as defined in \cref{eq:Pibar}, 
assuming that all integrals exist and the infinite series converge.
\end{definition}
 To conclude, we adapt \cite[Thm. 1]{BenG24} (see also \cite[Thm. 1]{al-baiyat_new_1993}) to our setting to show that if $P$ \cref{eq:sumpi} exists, then it satisfies a generalized Lyapunov equation.
\begin{theorem}[\cite{BenG24}]
    \label{thm:thm1}
    Let a BQO system \eqref{eq:BQO} with a stable matrix $A$ be given. If the reachability Gramian $P$ of the BQO system defined as in (\ref{eq:sumpi}) exists, then it satisfies the generalized Lyapunov equation  
    \begin{equation}
        AP+PA^T+\sum_{k=1}^mN_kPN_k^T +BB^T =0 \label{eq:thm1a}.
    \end{equation}
    Moreover, $P_i$ defined in \cref{eq:Pi} satisfies
    \begin{subequations}\label{eq:P_rec}
        \begin{align}
            AP_1+P_1A^T+BB^T=0,\\
            AP_i+P_iA^T+\sum_{k=1}^mN_kP_{i-1}N_k^T=0 \quad \text{for } i\geq 2.
        \end{align}
        \end{subequations}
        Also, define $\hat{P}_\ell=\sum_{i=1}^\ell P_i$. Then, 
        \begin{equation}\label{eq:P_fixit}
 A\hat{P}_\ell+\hat{P}_\ell A^T+\sum_{k=1}^mN_k\hat{P}_{\ell-1}N_k^T+BB^T=0 \quad \text{for } \ell\geq 2.
 \end{equation}
\end{theorem}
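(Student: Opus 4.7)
The plan is to establish the three assertions in the reverse order of difficulty: first derive the per-level Lyapunov equations \eqref{eq:P_rec} for each $P_i$, then sum them to obtain the partial-sum identity \eqref{eq:P_fixit}, and finally pass to the limit $\ell\to\infty$ to recover \eqref{eq:thm1a}. The main work lies in the recursion step; the other two statements follow almost mechanically from it together with the hypothesis that $P=\sum_{i=1}^\infty P_i$ exists.

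For the base case $i=1$, we have $\bar{P}_1(t_1)=e^{At_1}B$, so $P_1=\int_0^\infty e^{At_1}BB^Te^{A^Tt_1}\,dt_1$. Since $A$ is stable, differentiating the integrand with respect to $t_1$ and integrating from $0$ to $\infty$ (using $e^{At_1}\to 0$) yields the standard Lyapunov equation $AP_1+P_1A^T+BB^T=0$. For $i\geq 2$, I would substitute the recursive formula $\bar{P}_i=e^{At_i}\mathcal{N}(I_m\otimes\bar{P}_{i-1})$ from \cref{eq:Pibar} into the definition \cref{eq:Pi}. Using the Kronecker identities $(I_m\otimes X)(I_m\otimes Y)=I_m\otimes XY$ and $(I_m\otimes X)^T=I_m\otimes X^T$, together with the fact that $e^{At_i}\mathcal{N}$ and $\mathcal{N}^Te^{A^Tt_i}$ do not depend on $t_1,\dots,t_{i-1}$, the inner $(i-1)$-fold integral pulls inside the Kronecker product and collapses to $P_{i-1}$:
\begin{equation*}
P_i=\int_0^\infty e^{At_i}\,\mathcal{N}\bigl(I_m\otimes P_{i-1}\bigr)\mathcal{N}^T e^{A^T t_i}\,dt_i.
\end{equation*}
Applying \Cref{lem:lem1} with $q=m$, $G_k=N_k$, $H_k=N_k^T$, $R=P_{i-1}$ identifies $\mathcal{N}(I_m\otimes P_{i-1})\mathcal{N}^T=\sum_{k=1}^m N_kP_{i-1}N_k^T$, which is a fixed matrix. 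Hence $P_i$ is again a standard Lyapunov integral with stable $A$, and differentiating under the integral as above gives $AP_i+P_iA^T+\sum_{k=1}^m N_kP_{i-1}N_k^T=0$, which is exactly \cref{eq:P_rec}.

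For the partial sum $\hat{P}_\ell=\sum_{i=1}^\ell P_i$, I simply add the equations in \cref{eq:P_rec} from $i=1$ to $i=\ell$; the $BB^T$ contributes once (from $i=1$), and the sum $\sum_{i=2}^\ell\sum_{k=1}^m N_kP_{i-1}N_k^T=\sum_{k=1}^m N_k\hat{P}_{\ell-1}N_k^T$ by linearity, yielding \cref{eq:P_fixit}. Finally, assuming convergence of $\hat{P}_\ell\to P$ (hence also $\hat{P}_{\ell-1}\to P$), passing to the limit in \cref{eq:P_fixit} produces the generalized Lyapunov equation \cref{eq:thm1a}.

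The only delicate point is the Fubini-type rearrangement that exchanges the $(i-1)$-fold integration over $t_1,\dots,t_{i-1}$ with the Kronecker factor $(I_m\otimes\bar{P}_{i-1})$; once the integrands are seen to be absolutely integrable on each coordinate (as is implicit in the assumption that the individual $P_i$'s exist), this exchange is routine. The limit passage in Stage 3 is likewise automatic under the stated existence and convergence hypothesis. Thus the entire argument reduces to the clean algebraic manipulation in Stage 1, built on \Cref{lem:lem1} and the Kronecker product identities.
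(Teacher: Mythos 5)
Your proposal is correct and follows essentially the same route the paper relies on: the paper cites \cite{BenG24} for \Cref{thm:thm1} rather than spelling out a proof, but the identical argument—substitute the recursion \cref{eq:Pibar}, use the Kronecker identities to collapse the inner integrals to $I_m\otimes P_{i-1}$, apply \Cref{lem:lem1}, then sum and pass to the limit—is exactly what the paper carries out in its proof of \Cref{thm:thm2} for $Q^S$. No gaps; your handling of the base case, the recursion, the partial sums, and the limit all match.
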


The equations \cref{eq:thm1a}-\cref{eq:P_fixit} will prove crucial in proving the existence of a unique positive (semi)definite solution of \eqref{eq:thm1a} and in computing $P$ via a fixed-point iteration.

\subsection{Observability Gramian}\label{subsec:3-2}
Next we derive the observability Gramian of a BQO system as the reachability Gramian of the dual system using an approach similar to the one used in \cite{BenG24} for bilinear quadratic dynamical systems. For the \emph{primal system} 
\begin{subequations}\label{eq:primal}
\begin{align}
    \dot{x}(t) &= \mathcal{A}(x,u,t)x(t)+ \mathcal{B}(x,u,t)u(t), \quad x(0)=0, \\
    y(t) &= \mathcal{C}(x,u,t)x(t)+\mathcal{D}(x,u,t)u(t),
\end{align}
\end{subequations}
the associated \emph{dual system} is given by
\begin{align}
    \dot{x}(t) &= \mathcal{A}(x,u,t) x(t) + \mathcal{B}(x,u,t) u(t), \quad x(0)=0,\nonumber \\
    \dot{z}(t) &= -\mathcal{A}^T(x,u,t)z(t)-\mathcal{C}^T(x,u,t)\hat{u}(t), \quad z(\infty)=0, \label{eq:dual}\\
    \hat{y}(t)&=\mathcal{B}^T(x,u,t)z(t)+\mathcal{D}^T(x,u,t)\hat{u}(t), \nonumber
   \end{align}
where $z(t)\in\R^n$ describes the dual state, $\hat{u}(t)\in\R^p$ the dual input and $\hat{y}(t)\in\R^m$ the dual output; see, e.g., \cite{fujimoto_hamiltonian_2000}.

We have 
(at least) two ways to write the BQO system \eqref{eq:BQO} as a primal system of the form \eqref{eq:primal}. 
The somewhat straightforward choice is to consider the bilinear part of the state equation \eqref{eq:BQO_a1} as part of $\mathcal{A}(x,u,t)$, in other words, we choose 
\begin{equation}\label{eq-dual-standard}
\begin{aligned}
    \mathcal{A}(x,u,t) &= A+\sum_{k=1}^mN_ku_k(t), &
    \mathcal{B}(x,u,t)= B,\\
    \mathcal{C}(x,u,t)&= C+\begin{bmatrix}
        x(t)^TM_1 \\\vdots \\ x(t)^TM_p
    \end{bmatrix}, &
    \mathcal{D}(x,u,t)=0.
\end{aligned}
\end{equation}
A distinctly different possibility is to consider the bilinear part as part of $\mathcal{B}(x,u,t)$:
\begin{equation}\label{eq-dual-different}
\begin{aligned}
    \mathcal{B}(x,u,t) & = B+\begin{bmatrix}N_1x(t) & \dots & N_mx(t)\end{bmatrix}, & 
    \mathcal{A}(x,u,t) = A,\\
    \mathcal{C}(x,u,t)&= C+\begin{bmatrix}
        x(t)^TM_1 \\\vdots \\ x(t)^TM_p
    \end{bmatrix}, &
    \mathcal{D}(x,u,t)=0.
\end{aligned}
\end{equation}
One could also use a mixture of \eqref{eq-dual-standard} and \eqref{eq-dual-different} and define
\begin{equation}\label{eq-dual-mix}
\begin{aligned}
    \mathcal{A}(x,u,t) &= A+\sum_{k=1}^m \phi_kN_ku_k(t), \\
    \mathcal{B}(x,u,t) &= B +\begin{bmatrix}(1-\phi_1) N_1x(t) & \dots & (1-\phi_m) N_mx(t)\end{bmatrix},\\
    \mathcal{C}(x,u,t)&= C+\begin{bmatrix}
        x(t)^TM_1 \\\vdots \\ x(t)^TM_p
    \end{bmatrix}, \text{~~~and~~~}
    \mathcal{D}(x,u,t)=0
\end{aligned}
\end{equation}
for given constants $\phi_k, k = 1, \dots, m$ (or even matrices $\phi_k \in \mathbb{R}^{n \times n}$). While choosing $\phi_k=1$ yields the primal system \cref{eq-dual-standard}, choosing $\phi_k=0$ yields \cref{eq-dual-different}.

These variants of the primal system lead to different dual systems, and thus to different observability Gramians. This is explained in more detail in \Cref{subsubsec:3-2,subsubsec:3-3}  for the choices \eqref{eq-dual-standard} and \eqref{eq-dual-different}. In addition, the resulting generalized Lyapunov equations which are satisfied by the corresponding observability Gramians are derived. We show that although the observability Gramian in \cite{padhi_2024} was derived in a completely different way than the one we do  \Cref{subsubsec:3-2}, it satisfies the same Lyapunov equation as the Gramian derived \Cref{subsubsec:3-2}. Since that Lyapunov equation has a unique symmetric positive semidefinite solution under some assumptions, the two Gramians must therefore be identical. These considerations are explained in more detail in  \Cref{subsubsec:3-2} and \Cref{sec:5}.  The proofs of the existence of the different Gramians as well as the proof of the existence and uniqueness of the solution of the Lyapunov equations related to these Gramians are covered in \Cref{sec:5}.

\subsubsection{Observability Gramian using choice \cref{eq-dual-standard}}\label{subsubsec:3-2} 
First we interpret the BQO system \eqref{eq:BQO} as the primal system \eqref{eq:primal} with the choice \eqref{eq-dual-standard}. Then using \eqref{eq:dual}, we obtain the dual system
\begin{align}
    \dot{x}(t) &= Ax(t) +\sum_{k=1}^mN_kx(t)u_k(t) +Bu(t), \quad x(0)=0,\nonumber \\
    \dot{z}(t) &= -A^Tz(t)-\sum_{k=1}^m N_k^Tz(t)u_k(t)-C^T\hat{u}(t)-\sum_{j=1}^pM_jx(t)\hat{u}_{j}(t), \quad z(\infty)=0, \label{eq:dual2}\\
    \hat{y}(t)&=B^Tz(t). \nonumber
  \end{align}
The dual state $z(t)$ as a solution to \cref{eq:dual2} is given as 
\begin{align*}    
    z(t) &= \int^t_\infty e^{-A^T(t-t_1)}\Bigg[ C^T\hat{u}(t_1) + \sum_{k=1}^m N_k^Tz(t_1)u_k(t_1)
    + \sum_{j=1}^pM_jx(t_1)\hat{u}_{j}(t_1) \Bigg] dt_1 \\
        &= \int^t_\infty e^{-A^T(t-t_1)}\Bigg[ C^T\hat{u}(t_1) + \mathcal{N}_T\left(u(t_1)\otimes z(t_1)\right)
    +\mathcal{M}(\hat{u}(t_1)\otimes x(t_1)) \Bigg] dt_1,
\end{align*}
where $\mathcal{N}_T$ and $\mathcal{M}$ are as in \cref{eq:mathcal}.
By a change of variables $t_1\leftrightarrow t+t_1$, this last integral can be expressed as  

\begin{equation}  \label{eq:z_solu}
\begin{aligned}
z(t) &=
 \int_\infty^0 e^{A^Tt_1}\Bigg[ C^T\hat{u}(t+t_1) + \mathcal{N}_T\left(u(t+t_1)\otimes z(t+t_1)\right) \\
      &\qquad \qquad \qquad\qquad +\mathcal{M}\left(\hat{u}(t+t_1)\otimes x(t+t_1)\right)\Bigg] dt_1.
 \end{aligned}
 \end{equation}
In the discussion for the reachability Gramian $P$ in \Cref{subsec:3-1}, we used a Volterra series expansion of the state as $x(t)=\sum_{i=1}^\infty x_i(t)$ with $x_i(t)$ as in \cref{eq:xi_expr} where the index $i$ corresponds to the number of appearing exponential terms in the expressions. There are no longer dependencies of states of different order. We now use a similar ansatz for the dual state as $z(t)=\sum_{i=1}^\infty z_i(t)$, where the term $z_i(t)$ should correspond to a term with only exponential terms of order $i$. If we now substitute $x(t)=\sum_{i=1}^\infty x_i(t)$ and $z(t)=\sum_{i=1}^\infty z_i(t)$ into \cref{eq:z_solu}, we obtain
\begin{align*}
    z(t)=&
 \int_\infty^0 e^{A^Tt_1}\Bigg[ C^T\hat{u}(t+t_1) + \mathcal{N}_T\left(u(t+t_1)\otimes \left(\sum_{i=1}^\infty z_i(t+t_1)\right)\right) \\
      &\qquad \qquad \qquad\qquad +\mathcal{M}\left(\hat{u}(t+t_1)\otimes \left(\sum_{i=1}^\infty x_i(t+t_1)\right)\right)\Bigg] dt_1 \\
      =&
 \int_\infty^0 e^{A^Tt_1} C^T\hat{u}(t+t_1) dt_1 \\
 &+  \sum_{i=1}^\infty \int_\infty^0 e^{A^Tt_1}\Biggl[ \mathcal{N}_T\left(u(t+t_1)\otimes  z_i(t+t_1)\right) \\
      &\qquad \qquad \qquad\qquad +\mathcal{M}\left(\hat{u}(t+t_1)\otimes  x_i(t+t_1)\right)\Bigg] dt_1.
\end{align*}
By comparing the orders of exponential terms, we can now define the terms $z_i(t)$ as
\begin{align*}
    z_1(t)  &= \int^0_\infty e^{A^Tt_1} C^T\hat{u}(t+t_1) dt_1, \quad \mbox{for}~i=1, \\ 
    z_i(t)  &= \int^0_\infty e^{A^Tt_i}\begin{bmatrix}\mathcal{N}_T \Bigl(I_m\otimes z_{i-1}(t+t_{i})\Bigr)&\mathcal{M}\Bigl(I_p\otimes x_{i-1}(t+t_i)\Bigr)\end{bmatrix}\begin{bmatrix}
        u(t+t_i) \\\hat{u}(t+t_i)
    \end{bmatrix}dt_i,
\end{align*}
for $i \geq 2$. This motivates to define the observability mapping as
\begin{align*}
    \bar{Q}^S = \begin{bmatrix}
        \bar{Q}^S_1&\bar{Q}^S_2 & \dots&
    \end{bmatrix},
\end{align*}
where the $\bar{Q}^S_i$'s are defined as
\begin{equation} \label{eq:Qisbar}
    \begin{aligned}
    \bar{Q}^S_1(t_1) &= e^{A^Tt_1}C^T \in \mathbb{R}^{n\times p},\\
    \bar{Q}^S_2(t_1,t_2) &= e^{A^Tt_2}\begin{bmatrix}\mathcal{N}_T \Bigl(I_m\otimes \bar{Q}^S_1(t_1)\Bigr)  
    &\mathcal{M}\Bigl(I_p\otimes \bar{P}_1(t_1)\Bigr) \end{bmatrix} \in \mathbb{R}^{n\times 2pm}, \\
    &\vdots \\
    \bar{Q}^S_{i}(t_1,\dots, t_i) &= e^{A^Tt_i}\Bigl[\mathcal{N}_T \Bigl(I_m\otimes \bar{Q}^S_{i-1}(t_1,\dots,t_{i-1})\Bigr), \\
    &\qquad \qquad \qquad \qquad     \mathcal{M} \Bigl(I_p\otimes \bar{P}_{i-1}(t_1,\dots,t_{i-1})\Bigr) \Bigr]\in \mathbb{R}^{n\times 2pm^{i-1}}.
\end{aligned}
\end{equation}
With the observability mapping properly established, 
we define the (standard) observability Gramian $Q^S$ for the BQO system.
\begin{definition}[(Standard) Observability Gramian $Q^S$]
The (standard) observability Gramian $Q^S$ for the BQO system is defined as
\begin{align}
     \label{eq:sumqi}
Q^S=\sum_{i=1}^{\infty}Q^S_i \in\R^{n\times n},
\end{align}
with
\begin{equation}\label{eq:QSi}   Q^S_i=\int_0^\infty\dots\int_0^\infty \bar{Q}^S_{i}(t_1,\dots, t_i) \left(\bar{Q}^S_{i}(t_1,\dots, t_i)\right)^T dt_1\dots dt_i,
\end{equation}
where $\bar{Q}^S_{i}$ as defined in \cref{eq:Qisbar},  assuming that all integrals exist and the infinite series converge.
\end{definition}
Next, we show that assuming $P$ and $Q^S$ exist, $Q^S$ satisfies a generalized Lyapunov equation, which depends on the reachability Gramian $P.$
\begin{theorem}
    \label{thm:thm2}
     Let a BQO system \eqref{eq:BQO} with a stable matrix $A$ be given. Assume that
    the reachability Gramian $P$ and the observability Gramian $Q^S$ of the BQO system \eqref{eq:BQO} defined as in (\ref{eq:sumpi}) and (\ref{eq:sumqi}) exist. 
    Then the Gramian $Q^S$  satisfies the generalized $P$-dependent Lyapunov equation
    \begin{equation}
        A^TQ^S+Q^SA+\sum_{k=1}^mN_k^TQ^SN_k+\sum_{j=1}^pM_jPM_j  +C^TC=0.\label{eq:thm1b}
    \end{equation}
    \end{theorem}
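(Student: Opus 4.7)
The plan is to mirror the proof of \Cref{thm:thm1}, exploiting the recursive structure of $\bar{Q}^S_i$ in \cref{eq:Qisbar} to first derive a Lyapunov recursion for the individual summands $Q^S_i$, and then sum these to obtain the claimed equation for $Q^S$.

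First I would handle the base case $i=1$. Since $\bar{Q}^S_1(t_1) = e^{A^T t_1}C^T$, the definition \cref{eq:QSi} gives $Q^S_1 = \int_0^\infty e^{A^T t_1} C^T C e^{A t_1}\,dt_1$. Because $A$ is stable, this integral converges and yields the standard linear Lyapunov identity $A^T Q^S_1 + Q^S_1 A + C^T C = 0$. For $i\geq 2$, I would substitute the recursion
$\bar{Q}^S_i = e^{A^T t_i}\bigl[\mathcal{N}_T(I_m\otimes \bar{Q}^S_{i-1}),\ \mathcal{M}(I_p\otimes \bar{P}_{i-1})\bigr]$ into $\bar{Q}^S_i (\bar{Q}^S_i)^T$. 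Using the block identity $[A_1\ A_2][A_1\ A_2]^T = A_1 A_1^T + A_2 A_2^T$ together with the Kronecker identity $(I\otimes X)(I\otimes X)^T = I\otimes XX^T$, the integrand factors as
\begin{equation*}
e^{A^T t_i}\Bigl\{\mathcal{N}_T\bigl(I_m\otimes \bar{Q}^S_{i-1}(\bar{Q}^S_{i-1})^T\bigr)\mathcal{N}_T^T + \mathcal{M}\bigl(I_p\otimes \bar{P}_{i-1}\bar{P}_{i-1}^T\bigr)\mathcal{M}^T\Bigr\}e^{A t_i}.
\end{equation*}
Interchanging integration with the $(I_m\otimes\cdot)$ and $(I_p\otimes\cdot)$ operators, the inner $i-1$ integrals collapse to $I_m\otimes Q^S_{i-1}$ and $I_p\otimes P_{i-1}$, respectively, by the definitions \cref{eq:QSi} and \cref{eq:Pi}. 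Applying \Cref{lem:lem1} to the matrix blocks defined in \cref{eq:mathcalNt} and \cref{eq:mathcalM} gives
\begin{equation*}
\mathcal{N}_T(I_m\otimes Q^S_{i-1})\mathcal{N}_T^T = \sum_{k=1}^m N_k^T Q^S_{i-1} N_k, \qquad \mathcal{M}(I_p\otimes P_{i-1})\mathcal{M}^T = \sum_{j=1}^p M_j P_{i-1} M_j.
\end{equation*}
The remaining integration over $t_i$ is then the standard one-integral representation of a Lyapunov equation solution, yielding for $i\geq 2$
\begin{equation*}
A^T Q^S_i + Q^S_i A + \sum_{k=1}^m N_k^T Q^S_{i-1} N_k + \sum_{j=1}^p M_j P_{i-1} M_j = 0.
\end{equation*}

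Finally I would sum these identities over $i\geq 1$ (combining the base case with the $i\geq 2$ recursion), using $Q^S = \sum_{i=1}^\infty Q^S_i$ and $P = \sum_{i=1}^\infty P_i$ to identify $\sum_{i=2}^\infty Q^S_{i-1} = Q^S$ and $\sum_{i=2}^\infty P_{i-1} = P$, which collapses the telescoped sums to \cref{eq:thm1b}. The main obstacle is a rigorous justification of the termwise summation and the interchange of sums and integrals; however, this is already granted by the standing assumption of the theorem that the defining series and integrals for $P$ and $Q^S$ exist, so absolute convergence can be invoked in the same spirit as in \Cref{thm:thm1} (and as in the analogous step of \cite[Thm.~1]{BenG24}). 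The algebraic core of the argument is therefore purely a careful Kronecker-product computation driven by the block structure of $\bar{Q}^S_i$.
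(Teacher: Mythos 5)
Your proposal is correct and follows essentially the same route as the paper's own proof: the base case $A^TQ^S_1+Q^S_1A+C^TC=0$, the block/Kronecker expansion of $\bar{Q}^S_i(\bar{Q}^S_i)^T$ collapsing the inner integrals to $I_m\otimes Q^S_{i-1}$ and $I_p\otimes P_{i-1}$, the application of \Cref{lem:lem1} to obtain the recursion $A^T Q^S_i + Q^S_i A + \sum_{k=1}^m N_k^T Q^S_{i-1} N_k + \sum_{j=1}^p M_j P_{i-1} M_j = 0$, and finally summation over $i$ with passage to the limit. No substantive differences or gaps.
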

    \begin{proof}     
     To prove \cref{eq:thm1b}, we study the expansion \eqref{eq:sumqi} of $Q^S$. It is well known that (see, e.g., \cite[\S 3.8]{ZhoDG96}, \cite{Son98}, \cite[Thm. 7.25]{benner_modellreduktion_2024})
        \begin{align*}
            Q_1^S&=\int_0^\infty \bar{Q}^S_1(t_1)\bar{Q}^S_1(t_1)^Tdt_1 =\int_0^\infty e^{A^Tt_1}C^TCe^{At_1} dt_1
        \end{align*}
        solves the Lyapunov equation.
        \begin{equation}\label{eq:QS_reca}
        A^TQ^S_1+Q^S_1A+C^TC=0. 
        \end{equation}
       For $i\geq 2$, we have 
        \begin{align*}
            Q^S_i &= \int_0^\infty \dots \int_0^\infty \bar{Q}^S_i\left(\bar{Q}^S_i\right)^Tdt_1\dots dt_i,\\
            &= \int_0^\infty \dots \int_0^\infty e^{A^Tt_i} \begin{bmatrix} \mathcal{N}_T\Bigl(I_m\otimes\bar{Q}^S_{i-1}\Bigr) & \mathcal{M}\Bigl(I_p\otimes \bar{P}_{i-1}\Bigr)\end{bmatrix} \\
            &\qquad \cdot \begin{bmatrix}\mathcal{N}_T\Bigl(I_m\otimes\bar{Q}^S_{i-1}\Bigr) & \mathcal{M}\Bigl(I_p\otimes \bar{P}_{i-1}\Bigr)\end{bmatrix}^T e^{At_i} dt_1\dots dt_i.
        \end{align*}
Multiplying out yields 
            \begin{align*}
            Q^S_i&= \int_0^\infty \dots \int_0^\infty e^{A^Tt_i} \left[ \mathcal{N}_T\Bigl(I_m\otimes\bar{Q}^S_{i-1}\Bigr)\left(I_m\otimes\bar{Q}^S_{i-1}\right)^T\mathcal{N}_T^T \right.\\
            &\qquad \qquad + \left. \mathcal{M}\Bigl(I_p\otimes \bar{P}_{i-1}\Bigr)\Bigl(I_p\otimes \bar{P}_{i-1}\Bigr)^T\mathcal{M}^T \right] dt_1\dots dt_i\\            
            &= \int_0^\infty e^{A^Tt_i} \left[ \mathcal{N}_T\Bigl(I_m\otimes Q^S_{i-1}\Bigr)\mathcal{N}_T^T + \mathcal{M}\Bigl(I_p\otimes P_{i{-1}}\Bigr)\mathcal{M}^T\right]e^{At_i}dt_i.
            \end{align*}
            Using \Cref{lem:lem1},  we see that $Q^S_i$ solves
            \begin{align}\label{eq_Lyap_Qi}
                A^TQ^S_i+Q^S_iA+\sum_{k=1}^m N_k^T Q^S_{i-1} N_k+ \sum_{j=1}^pM_jP_{i-1}M_j =0.
            \end{align}
         Summing over \eqref{eq:QS_reca} and \eqref{eq_Lyap_Qi} for $i = 2, 3, \dots, \ell$, we have, for $\hat Q^S_\ell:=\sum_{i=1}^\ell Q^S_i$, that
\begin{equation}\label{eq:QS_hat}
    A^T \hat Q^S_\ell + \hat Q^S_\ell A + \sum_{k=1}^m N_k^T\hat Q^S_{\ell-1}N_k + \sum_{j=1}^p M_j \hat P_{\ell-1}M_j +C^TC = 0
\end{equation}
            holds with $\hat{P}_i$ as in \cref{eq:P_fixit}.
As $Q^S:=\sum_{i=1}^\infty Q^S_i = \lim_{\ell \rightarrow \infty} \hat Q^S_\ell$, this proves \cref{eq:thm1b}.      
    \end{proof}
    
The equations \eqref{eq:QS_reca} and \eqref{eq:QS_hat} will be helpful in proving the existence of a positive (semi)definite solution of \eqref{eq:thm1b} and in computing $Q^S$ via a fixed-point iteration.

The Lyapunov equation \eqref{eq:thm1b} already appears in \cite{padhi_2024}. There, however, a completely different approach was used to derive the observability Gramian, which we denote by $Q^{P}$ in order to distinguish it from $Q^S$.      
      The construction of $Q^P$ is based on the observability Gramian $Q^B$ for the bilinear case (that is, \eqref{eq:BQO} with $M_1 = \cdots = M_p=0$), which can be constructed as follows (see, e.g., \cite{zhang_2002}). Define 
         \begin{align*}
            \bar{Q}_1^B(t_1) &= e^{A^Tt_1}C^T \in \mathbb{R}^{n\times p},\\
            \bar{Q}_{i}^B(t_1,\dots, t_i) &= e^{A^Tt_i}\mathcal{N}_T(I_m\otimes \bar{Q}^B_{i-1}(t_1,\dots,t_{i-1})) \in \mathbb{R}^{n\times pm^{i-1}}.
         \end{align*}
        Then, the observability Gramian $Q^B$ of the bilinear system (\cref{eq:BQO} with $M_1=\dots = M_p=0$) is defined as
        \begin{align} \label{eq:sumqi_B}
        Q^B=\sum_{i=1}^{\infty}Q^B_i \in\R^{n\times n} 
        \end{align}
        with
        \begin{align*}
    Q_i^B=\int_0^\infty\dots\int_0^\infty \bar{Q}^B_{i}(t_1,\dots, t_i) \left(\bar{Q}^B_{i}(t_1,\dots, t_i)\right)^T dt_1\dots dt_i
        \end{align*}
        assuming that all integrals exist and the infinite series converge. If $Q^B$ exists, it is a solution of the generalized Lyapunov equation \eqref{eq:thm1b} with $M_1=\cdots=M_p=0$
        \[
        A^TQ^B+Q^BA+\sum_{k=1}^mN_k^TQ^BN_k+C^TC=0.
        \]
        With this perspective, the observability Gramian $Q^P$ is defined as follows.
        \begin{definition}{(Observability Gramian $Q^P$ \cite[Definition 3.3.2]{padhi_2024})}\label{ourdef}
            Let $Q^B$ be the bilinear observability Gramian from \cref{eq:sumqi_B}. For $j=1,2,\dots$ let
            \begin{align*}
                \bar{Q}_{1,j}(s_1,t_1,\dots, t_j)&= e^{A^Ts_1}\mathcal{M}(I_p\otimes \bar{P}_j(t_1,\dots,t_j)), \\
                \bar{Q}_{i,j}(s_1,\dots,s_{i},t_1,\dots t_j)&= e^{A^Ts_i}\mathcal{N}_T(I_m\otimes \bar{Q}_{i-1,j}(s_1,\dots,s_{i-1},t_1,\dots t_j)) \quad \text{for }i>1.
                \end{align*}
                Define
                \begin{align} \label{eq:sumqi_P}
                    Q^P=Q^B+\sum_{i,j=1}^\infty Q_{i,j}
                \end{align}
                with
                \begin{align*}       
                Q_{i,j} &= \int_0^\infty \dots \int_0^\infty \bar{Q}_{i,j}(s_1,\dots,s_{i},t_1,\dots t_j) \\
                &\qquad \qquad \cdot \left( \bar{Q}_{i,j}(s_1,\dots,s_{i},t_1,\dots t_j) \right)^T dt_1\dots dt_jds_1\dots ds_i.
            \end{align*}
        \end{definition}
        In \cite[Prop. 3.3.1, Prop. 3.3.2 and Cor. 3.3.3]{padhi_2024}, it is shown that 
        $Q^P$ satisfies  
        the generalized Lyapunov equation \eqref{eq:thm1b}. Please note that in \cite{padhi_2024} only the SISO case ($m=p=1$) is considered. In this manuscript, Definition \ref{ourdef} as well as \eqref{eq:sumqi_B} are a generalization to the MIMO case. 
           
        If $Q^S$ and $Q^P$ both exist, and there is a unique symmetric, positive semidefinite solution to equation \eqref{eq:thm1b}, then $Q^S$ and $Q^P$ are equivalent, which we will prove in \Cref{sec:5}.
        Comparing the construction of $Q^S$ and $Q^P$, we also observe that if the associated Lyapunov equations have unique solutions, then
        \begin{align*}
            Q_1^S&=Q_1^B,\\
            Q_i^S&=Q_i^B + \sum_{l=1}^{i-1}Q_{l,i-l}.
        \end{align*}

\begin{remark}\label{rem:gen_standard}  
    It has already been mentioned in passing that the results presented here include the case for bilinear systems as a special case with $M_1=\dots=M_p=0$, where the Lyapunov equations are given as
    \begin{subequations}
    \label{eq:bil_lyap_eq}
    \begin{align}    
        AP+PA^T+\sum_{k=1}^mN_kPN_k^T+BB^T&=0, \label{eq:bil_lyap_eq1}\\
        A^TQ+QA+\sum_{k=1}^mN_k^TQN_k+C^TC&=0. \label{eq:bil_lyap_eq2}
    \end{align}
    \end{subequations}
    LQO systems  with $N_1=\dots = N_m=0$ also appear as a special case. In that case the corresponding Lyapunov equations are given as
    \begin{subequations}\label{eq:lqo_lyap_eq}
    \begin{align}
        AP+PA^T+BB^T&=0, \label{eq:lqo_lyap_eq1}\\
        A^TQ+QA+\sum_{j=1}^pM_jPM_j +C^TC&=0. \label{eq:lqo_lyap_eq2}
    \end{align}
    \end{subequations}   
    \end{remark}

\subsubsection{Observability Gramian using choice \cref{eq-dual-different}}
\label{subsubsec:3-3}
In this section, we will develop an alternative observability Gramian using the primal system \cref{eq:primal} with the choice \cref{eq-dual-different}.
Then using \eqref{eq:dual}, this leads to the dual system
\begin{align}
    \dot{x}(t) &= Ax(t) +\sum_{k=1}^mN_kx(t)u_k(t) +Bu(t), \quad x(0)=0,\nonumber \\
    \dot{z}(t) &= -A^Tz(t)-C^T\hat{u}(t)-\sum_{j=1}^pM_jx(t)\hat{u}_{j}(t), \quad z(\infty)=0, \label{eq:dual3}\\
    \hat{y}(t)&=B^Tz(t)+\begin{bmatrix}
        x(t)^TN_1^T \\ \vdots \\ x(t)^TN_m^T
    \end{bmatrix}z(t). \nonumber
   \end{align}
The dual state $z(t)$ as a solution to \cref{eq:dual3} is given as 
\[
    z(t) = \int^t_\infty e^{-A^T(t-t_1)}\Bigg[ C^T\hat{u}(t_1) 
    + \sum_{j=1}^pM_jx(t_1)\hat{u}_{j}(t_1) \Bigg] dt_1. 
\]
A discussion analogous to the one in Section \ref{subsubsec:3-2} leads to the definition of the observability mapping
\begin{align*}
    \bar{Q}^A = \left[
        \bar{Q}^A_1,~\bar{Q}^A_2,~\dots
    \right],
\end{align*}
with the $\bar{Q}_i^A$'s defined as
\begin{equation} \label{eq:QiAsbar}
\begin{aligned}
    \bar{Q}_1^A(t_1) &= e^{A^Tt_1}C^T \in \mathbb{R}^{n\times p},\\
    \bar{Q}_2^A(t_1,t_2) &= e^{A^Tt_2}\mathcal{M} (I_p\otimes \bar{P}_1(t_1)) \in \mathbb{R}^{n\times pm}, \\
    &\vdots \\
    \bar{Q}_{i}^A(t_1,\dots, t_i) &= e^{A^Tt_i}\mathcal{M} (I_p\otimes \bar{P}_{i-1}(t_1,\dots,t_{i-1})) \in \mathbb{R}^{n\times pm^{i-1}}.
\end{aligned}
\end{equation}
With the observability mapping, 
we define the observability Gramian $Q^A$ for the BQO system \cref{eq:BQO}.
\begin{definition}{(Alternative)  Observability Gramian $Q^A$)}
The (alternative) ob\-serv\-ability Gramian $Q^A$ for the BQO system is defined as
\begin{align} \label{eq:sumqi_D}
Q^A=\sum_{i=1}^{\infty}Q^A_i \in\R^{n\times n} 
\end{align}
with
\begin{align*}
    Q_i^A=\int_0^\infty\dots\int_0^\infty \bar{Q}^A_{i}(t_1,\dots, t_i) \left(\bar{Q}^A_{i}(t_1,\dots, t_i)\right)^T dt_1\dots dt_i,
\end{align*}
where $\bar{Q}^A_i$ as defined in \cref{eq:QiAsbar}, assuming that all integrals exist and the infinite series converge.
\end{definition}
Using the same arguments as in Section \ref{subsubsec:3-2},  one can show that $Q^A$ satisfies a (standard) Lyapunov equation which depends on the reachability Gramian $P$.
\begin{theorem}
    \label{thm:thm3}
     Let a BQO system \eqref{eq:BQO} with a stable matrix $A$ be given. Assume that the reachability Gramian $P$ and the observability Gramian $Q^A$ of the BQO system \cref{eq:BQO} defined as in (\ref{eq:sumpi}) and (\ref{eq:sumqi_D}) exist. Then, the Gramian $Q^A$ satisfies the $P$-dependent Lyapunov equation
        \begin{equation}
        A^TQ^A+Q^AA+\sum_{j=1}^pM_jPM_j + C^TC=0.\label{eq:thm3}
    \end{equation}
    \end{theorem}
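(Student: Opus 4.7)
The plan is to mirror the structure of the proof of \Cref{thm:thm2}, but the argument is actually simpler because the observability mapping $\bar{Q}^A_i$ defined in \cref{eq:QiAsbar} contains no $\mathcal{N}_T$ terms; only the $\mathcal{M}$ part remains. I would first treat the base case $i=1$ separately: since $\bar{Q}^A_1(t_1) = e^{A^Tt_1}C^T$ agrees with $\bar{Q}^S_1$, the matrix $Q^A_1$ is the ordinary observability Gramian of the linear pair $(A,C)$, and thus satisfies
\begin{equation}\label{eq:qa1-lyap}
A^T Q^A_1 + Q^A_1 A + C^T C = 0.
\end{equation}

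Next, for $i \geq 2$, I would substitute the definition of $\bar{Q}^A_i$ into \cref{eq:sumqi_D} and expand the product, obtaining
\[
Q^A_i = \int_0^\infty\!\!\!\dots\!\int_0^\infty e^{A^T t_i}\,\mathcal{M}\bigl(I_p \otimes \bar{P}_{i-1}\bigr)\bigl(I_p\otimes \bar{P}_{i-1}\bigr)^T \mathcal{M}^T e^{A t_i}\, dt_1 \dots dt_i.
\]
Using $(I_p\otimes X)(I_p\otimes Y) = I_p\otimes XY$ and carrying out the integration over $t_1,\dots,t_{i-1}$, this collapses to
\[
Q^A_i = \int_0^\infty e^{A^T t_i}\, \mathcal{M}\bigl(I_p\otimes P_{i-1}\bigr)\mathcal{M}^T\, e^{A t_i}\, dt_i,
\]
with $P_{i-1}$ as in \cref{eq:Pi}. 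Then, \Cref{lem:lem1} identifies the bracketed term with $\sum_{j=1}^p M_j P_{i-1} M_j$, which is a constant symmetric matrix. The standard identity for linear Lyapunov integrals therefore gives
\begin{equation}\label{eq:qai-lyap}
A^T Q^A_i + Q^A_i A + \sum_{j=1}^p M_j P_{i-1} M_j = 0 \qquad \text{for } i \geq 2.
\end{equation}

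Finally, I would sum \cref{eq:qa1-lyap} and \cref{eq:qai-lyap} over $i=1,\dots,\ell$ to obtain, with $\hat Q^A_\ell := \sum_{i=1}^\ell Q^A_i$ and $\hat P_\ell$ as in \cref{eq:P_fixit},
\[
A^T \hat Q^A_\ell + \hat Q^A_\ell A + \sum_{j=1}^p M_j \hat P_{\ell-1} M_j + C^T C = 0,
\]
and pass to the limit $\ell\to\infty$, using the assumed convergence $\hat P_\ell \to P$ and $\hat Q^A_\ell \to Q^A$, to conclude \cref{eq:thm3}.

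The only real obstacle is the interchange of the limit in $\ell$ with the linear operations in the Lyapunov equation, but this is immediate once existence of the series sums $P$ and $Q^A$ is assumed in the hypothesis. Everything else is a direct application of tools already assembled in \Cref{subsec:3-1} and \Cref{subsubsec:3-2}, in particular \Cref{lem:lem1} and the telescoping argument used to derive \cref{eq:QS_hat}.
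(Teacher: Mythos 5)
Your proposal is correct and is exactly the argument the paper intends: the paper proves \Cref{thm:thm3} only by the remark that it follows ``using the same arguments as in Section~\ref{subsubsec:3-2},'' and your write-up carries out precisely that adaptation of the proof of \Cref{thm:thm2}, with the $\mathcal{N}_T$ block absent so that each $Q^A_i$ for $i\geq 2$ solves a standard Lyapunov equation with right-hand side $\sum_{j=1}^p M_j P_{i-1}M_j$ before summing and passing to the limit.
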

\begin{remark}    
   For bilinear systems where $M_1=\dots=M_p=0$, the Lyapunov equation \cref{eq:thm3} for the observability Gramian $Q^A$ simplifies to the standard Lyapunov equation, $A^TQ+QA+C^TC=0$.
The solution to this equation is the observability Gramian for a linear system. However, this differs from the Lyapunov equation \cref{eq:bil_lyap_eq2}, which yields the observability Gramian in the bilinear case. That is, with the primal system \eqref{eq-dual-different}, we also obtain an alternative observability Gramian formulation for bilinear systems. The case for LQO systems is automatically included as before in \Cref{rem:gen_standard}.    \end{remark}
   \begin{remark}
        It is well known that (see, e.g., \cite{LanT85}) when $P$ is symmetric positive semidefinite matrix and $A$ is stable, the (standard) Lyapunov equation $A^TQ^A+Q^AA+\sum_{j=1}^p M_jPM_j+C^TC=0$ has a unique symmetric positive semidefinite solution $Q^A$. 
        Furthermore, if $(A^T,C^T)$ is reachable (equivalently if $(A,C)$ is observable), then $Q^A$ is positive definite.
    \end{remark}

\subsubsection{Observability Gramian using choice \cref{eq-dual-mix}}\label{subsubsec:3-4}
Similar to Subsections \ref{subsubsec:3-2} and \ref{subsubsec:3-3}, we can also derive the observability Gramian for a BQO system using the \emph{mixed} primal system formulation \cref{eq-dual-mix}. We will denote the resulting gramian by $Q^M$. {The derivation follows analogous to the derivation of $Q^S$ in \Cref{subsubsec:3-2} whereby we use $\phi_kN_k$ instead of $N_k$ here.}  Then, the resulting Gramian $Q^M$ solves a slightly different generalized Lyapunov equation. {This Lyapunov equation stated in the following theorem can be proven similar to \Cref{thm:thm2}.}
\begin{theorem}
    \label{thm:thm3a}
     Let a BQO system \eqref{eq:BQO} with a stable matrix $A$ be given. Assume that the reachability Gramian $P$ defined as in (\ref{eq:sumpi}) and the observability Gramian $Q^M$ of the BQO system \cref{eq:BQO} exist. Then the Gramian $Q^M$ satisfies the generalized $P$-dependent Lyapunov equation
        \begin{equation} 
        A^TQ^M+Q^MA+\sum_{k=1}^m \phi_k^2N_k^TQ^MN_k+\sum_{j=1}^pM_jPM_j + C^TC=0.\label{eq:thm3a}
    \end{equation} 
\end{theorem}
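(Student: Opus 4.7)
The plan is to mirror, step for step, the derivation of $Q^S$ in \Cref{subsubsec:3-2} and the proof of \Cref{thm:thm2}, changing only the places where the weights $\phi_k$ from the mixed primal formulation \cref{eq-dual-mix} enter. First, I would substitute the mixed $\mathcal{A}$, $\mathcal{B}$, $\mathcal{C}$ from \cref{eq-dual-mix} into the general dual template \cref{eq:dual}. Since the bilinear term now appears in $\mathcal{A}$ only through $\phi_k N_k$, the dual state equation becomes
\begin{equation*}
\dot z(t) = -A^T z(t) - \sum_{k=1}^m \phi_k N_k^T z(t) u_k(t) - C^T \hat u(t) - \sum_{j=1}^p M_j x(t) \hat u_j(t),\ z(\infty)=0,
\end{equation*}
with an implicit integral representation analogous to \cref{eq:z_solu} in which every occurrence of $\mathcal{N}_T$ is replaced by $\mathcal{N}_T^\phi := [\phi_1 N_1^T,\,\ldots,\,\phi_m N_m^T]$. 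The remaining $(1-\phi_k) N_k x$ piece of $\mathcal{B}$ enters only the dual output through $\mathcal{B}^T z$ and thus does not affect $z(t)$ or $Q^M$.

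Next, I would insert the Volterra ansatzes $x(t)=\sum_{i\ge 1}x_i(t)$ and $z(t)=\sum_{i\ge 1}z_i^M(t)$ into the implicit representation and match terms by the number of exponential factors, producing an observability mapping $\bar Q^M_i$ with the same recursive shape as \cref{eq:Qisbar} but with every $\mathcal{N}_T$ replaced by $\mathcal{N}_T^\phi$; the base case $\bar Q^M_1(t_1)=e^{A^T t_1}C^T$ is unchanged, so $Q^M_1$ solves $A^T Q^M_1 + Q^M_1 A + C^T C = 0$. Defining $Q^M_i$ in analogy with \cref{eq:QSi} and expanding the block outer product $\bar Q^M_i(\bar Q^M_i)^T$, the only nontrivial change arises in the first block, where \Cref{lem:lem1} now gives
\begin{equation*}
\mathcal{N}_T^\phi\bigl(I_m\otimes Q^M_{i-1}\bigr)(\mathcal{N}_T^\phi)^T = \sum_{k=1}^m \phi_k^2\, N_k^T Q^M_{i-1} N_k,
\end{equation*}
while the $\mathcal{M}\bigl(I_p\otimes P_{i-1}\bigr)\mathcal{M}^T$ block is untouched because $\phi_k$ never multiplies $\mathcal{M}$.

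Summing the resulting recursions for $i=1,\ldots,\ell$ exactly as in the proof of \Cref{thm:thm2} gives, for the partial sums $\hat Q^M_\ell := \sum_{i=1}^\ell Q^M_i$,
\begin{equation*}
A^T \hat Q^M_\ell + \hat Q^M_\ell A + \sum_{k=1}^m \phi_k^2\, N_k^T \hat Q^M_{\ell-1} N_k + \sum_{j=1}^p M_j \hat P_{\ell-1} M_j + C^T C = 0,
\end{equation*}
and passing $\ell\to\infty$ under the assumed existence of $P$ and $Q^M$ yields \cref{eq:thm3a}. The only real obstacle is bookkeeping: one must verify that the $\phi_k$'s attach exactly to the $\mathcal{N}_T$-type block of the observability mapping (because they originate in $\mathcal{A}$ alone), so that the Kronecker-product squaring through \Cref{lem:lem1} produces the $\phi_k^2$ factors on the $N_k^T Q^M N_k$ terms while leaving the $M_j P M_j$ and $C^T C$ contributions intact. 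Everything else, namely convergence of the Volterra series, the telescoping of the recursions, and the passage to the limit, carries over verbatim from the proof of \Cref{thm:thm2}.
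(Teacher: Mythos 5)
Your proposal is correct and follows exactly the route the paper intends: the paper gives no explicit proof of \Cref{thm:thm3a}, stating only that it is "proven similar to \Cref{thm:thm2}" with $\phi_k N_k$ in place of $N_k$, and your argument fills in precisely that analogy, including the key observations that the $(1-\phi_k)N_k x$ portion of $\mathcal{B}$ only enters the dual output and hence not $z(t)$, and that \Cref{lem:lem1} applied to $\mathcal{N}_T^\phi$ produces the $\phi_k^2$ factors while leaving the $M_jPM_j$ block unchanged. No gaps.
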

    In the following, we will mainly focus on the observability Gramians $Q^S$ and $Q^A$, which are identical to $Q^M$ for $\phi_k=1$ and $\phi_k=0$, for all $k=1,\dots,m$, respectively. An investigation of suitable choices for the parameters $\phi_k$ is left for future research.
\section{Existence of the Gramians}
\label{sec:5}
We will first prove the existence of the different Gramians introduced in Section \ref{sec:3}. 
Then, we will show that $Q^S$ and $Q^P$ are indeed equal by proving that the associated Lyapunov equation has a unique solution.

Recall that $A$ is stable if there exist $\beta >0$ and $0<\alpha \leq -\max_i(\text{Re}(\lambda_i(A)))$ such that $\norm{e^{At}}\leq \beta e^{-\alpha t}$ for $t\geq 0$ \cite{adrianova_1995,willems70}. The constants $\alpha$ and $\beta$ determine whether the different Gramians from Section \ref{sec:3} exist. For each of the Gramians to exist, a slightly different condition needs to hold.    
    
   \begin{theorem}\label{thm:PQexist} Let a BQO system \eqref{eq:BQO} with a stable matrix $A$ be given. Let the stability parameters $\beta >0$ and $0<\alpha \leq -\max_i(Re(\lambda_i(A)))$ be given such that $\norm{e^{At}}\leq \beta e^{-\alpha t}$ for $t\geq 0$. Then,         
            \begin{enumerate}
                \item[(a)] the reachability Gramian $P$ in \cref{eq:sumpi} exists if \\
                ~\hspace*{2cm}{$\Gamma_P := \norm{\mathcal{N}}^2 < 2\alpha/\beta^2,$ } 
                
                \item[(b)]  the observability Gramians $Q^S$ in \cref{eq:sumqi} and 
                $Q^P$ in \cref{eq:sumqi_P} exist if \\
                ~\hspace*{2cm}{$\Gamma_{Q^S} = \Gamma_{Q^P}:=\max\{ \Gamma_P , \norm{\mathcal{N}_T}^2, \norm{\mathcal{M}}^2\}< 2\alpha/\beta^2,$}     
                
                \item[(c)] the observability Gramian $Q^A$ in \cref{eq:sumqi_D} exists if \\
                ~\hspace*{2cm}{$\Gamma_{Q^A}:=\max\{\Gamma_P,\norm{\mathcal{M}}^2\}< 2\alpha/\beta^2.$}            
            \end{enumerate}
                  
    \end{theorem}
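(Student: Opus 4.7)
The plan is to combine the stability estimate $\norm{e^{At}} = \norm{e^{A^T t}} \leq \beta e^{-\alpha t}$ with submultiplicativity of the spectral norm and the identity $\norm{I_q \otimes X} = \norm{X}$ in order to propagate pointwise bounds through the defining recursions \cref{eq:Pibar}, \cref{eq:Qisbar}, \cref{eq:QiAsbar}, and Definition~\ref{ourdef}. The first step is to induct on the recursion index to obtain an estimate of the form $\norm{\bar{P}_i(t_1,\ldots,t_i)} \leq \beta^i e^{-\alpha(t_1+\cdots+t_i)} c_i$, and analogous bounds for $\bar{Q}^S_i$, $\bar{Q}^A_i$, and $\bar{Q}_{i,j}$, where the prefactor $c_i$ grows at most as a polynomial times a geometric factor in $i$. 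Using $\norm{P_i} \leq \int_0^\infty\!\!\cdots\!\int_0^\infty \norm{\bar{P}_i}^2\,dt_1\cdots dt_i$, the squared bound separates into $i$ one-dimensional integrals $\int_0^\infty e^{-2\alpha t_k}\,dt_k = 1/(2\alpha)$, yielding a factor $(\beta^2/(2\alpha))^i$. Absolute summability $\sum_i \norm{P_i} < \infty$ then implies convergence of the series defining $P$, and likewise for the remaining Gramians.

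For (a), applying this scheme to $\bar{P}_i = e^{At_i}\mathcal{N}(I_m \otimes \bar{P}_{i-1})$ yields by straightforward induction $\norm{\bar{P}_i} \leq \beta^i e^{-\alpha(t_1+\cdots+t_i)} \norm{\mathcal{N}}^{i-1}\norm{B}$, so that $\norm{P_i} \leq \norm{B}^2\norm{\mathcal{N}}^{-2}\bigl(\beta^2\norm{\mathcal{N}}^2/(2\alpha)\bigr)^i$ is a geometric series summable exactly when $\Gamma_P < 2\alpha/\beta^2$. For (b), the coupled recursion $\bar{Q}^S_i = e^{A^T t_i}[\mathcal{N}_T(I_m\otimes \bar{Q}^S_{i-1}),\,\mathcal{M}(I_p\otimes \bar{P}_{i-1})]$ is more delicate; using $\norm{[X,Y]} \leq \norm{X}+\norm{Y}$ and setting $\Gamma := \max\{\norm{\mathcal{N}},\norm{\mathcal{N}_T},\norm{\mathcal{M}}\}$, induction combined with the bound from (a) gives $\norm{\bar{Q}^S_i} \leq \beta^i e^{-\alpha(t_1+\cdots+t_i)}\Gamma^{i-1}(\norm{C}+(i-1)\norm{B})$, whence $\norm{Q^S_i} \leq \Gamma^{-2}(\norm{C}+(i-1)\norm{B})^2\bigl(\beta^2\Gamma^2/(2\alpha)\bigr)^i$, summable by the root test whenever $\Gamma_{Q^S} < 2\alpha/\beta^2$. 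For $Q^P$, existence of $Q^B$ follows by applying the argument of (a) with $A^T,\mathcal{N}_T,C^T$ in place of $A,\mathcal{N},B$, while the double series $\sum_{i,j\geq 1}\norm{Q_{i,j}}$ factors as a product of two independent geometric series, one in $i$ driven by $\mathcal{N}_T$ and one in $j$ driven by $\mathcal{N}$, both convergent under $\Gamma_{Q^P} < 2\alpha/\beta^2$.

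For (c), the recursion $\bar{Q}^A_i = e^{A^T t_i}\mathcal{M}(I_p\otimes \bar{P}_{i-1})$ carries no coupling through $\mathcal{N}_T$, so the bound collapses to the one from (a) after a single application of $\mathcal{M}$: $\norm{Q^A_i} \leq c\,\bigl(\beta^2\norm{\mathcal{N}}^2/(2\alpha)\bigr)^i$ for $i\geq 2$, giving summability already under $\Gamma_P<2\alpha/\beta^2$; the additional condition $\norm{\mathcal{M}}^2<2\alpha/\beta^2$ built into $\Gamma_{Q^A}$ is then automatic and retained for uniformity with (a) and (b). The main obstacle is the coupled recursion in (b): the linear-in-$i$ prefactor $\norm{C}+(i-1)\norm{B}$ precludes a pure geometric-series argument and forces an appeal to the root (or ratio) test, but once this polynomial growth is carefully tracked through the induction, the remaining work reduces to separating exponential integrals and summing geometric series.
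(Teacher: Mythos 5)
Your proposal is correct and follows essentially the same route as the paper: propagate the stability estimate and the Kronecker norm identity through the recursions, separate the exponential integrals into factors of $1/(2\alpha)$, and sum the resulting (geometric-type) series, with the $Q^P$ double series factoring into a product over $i$ and $j$. The only cosmetic difference is that you bound $\norm{\bar{Q}^S_i}$ and square (giving $(\norm{C}+(i-1)\norm{B})^2$ via $\norm{[X,Y]}\leq\norm{X}+\norm{Y}$), whereas the paper bounds $\norm{\bar{Q}^S_i(\bar{Q}^S_i)^T}$ directly using $[X,Y][X,Y]^T=XX^T+YY^T$ to get the slightly sharper prefactor $(i-1)\norm{BB^T}+\norm{C^TC}$; both handle the linear-in-$i$ growth the same way.
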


    \begin{proof}
        The proof of (a) for the existence of $P$ can be found, e.g.,  in \cite{zhang_2002}.  Thus we focus on the existence conditions for observability Gramians, starting with $Q^S$.
        Recall the equations \cref{eq:sumpi,eq:sumqi}, and 
        \begin{align*}
            \bar{P}_1 &= e^{At_1}B, &\bar{Q}^S_1 = e^{A^Tt_1}C^T,\\
            \bar{P}_i &= e^{At_i}\mathcal{N}(I_m\otimes \bar{P}_{i-1}), &\bar{Q}^S_i = e^{A^Tt_i}[\mathcal{N}_T(I_m\otimes \bar{Q}^S_{i-1})\quad\mathcal{M}(I_p\otimes \bar{P}_{i-1})],
        \end{align*}
        for $i>1$.
        As already noted in \cite{zhang_2002}, it holds that
        \begin{subequations}       
        \label{eq:norm_pi}
        \begin{align}
            \norm{\bar{P}_i\bar{P}_i^T} &= \norm{e^{At_i}\mathcal{N}(I_m\otimes \bar{P}_{i-1}) (I_m\otimes \bar{P}_{i-1}^T)\mathcal{N}^Te^{A^Tt_i}}\\
            &\leq \norm{e^{At_i}} \norm{\mathcal{N}}\norm{(I_m\otimes \bar{P}_{i-1}) (I_m\otimes \bar{P}_{i-1}^T)} \norm{\mathcal{N}^T}\norm{e^{At_i}}\\
            &\leq \beta^2 e^{-2\alpha t_i} \Gamma_{Q^S}  
            \norm{\bar{P}_{i-1}\bar{P}_{i-1}^T} \label{eq:4.1c} \\
            &\leq  \beta^{2i}e^{-2\alpha(t_i+t_{i-1}+\dots+t_1)} \Gamma_{Q^S}^{i-1} 
            \norm{BB^T}, \label{eq:4.1d}
        \end{align}        
        \end{subequations}
 {where, for the derivation of \cref{eq:4.1c}, we used the properties of the Kronecker product, the definition of $\Gamma_{Q^S}$, and the estimate for $\norm{e^{At}}$. Then, \cref{eq:4.1d} follows with repeated substitutions for $i$.} 
        In a similar fashion, we obtain 
        \begin{align*}
            \norm{\bar{Q}^S_1\left(\bar{Q}_1^S\right)^T} &= \norm{e^{A^Tt_1}C^TCe^{At_1}} \leq \beta ^2 e^{-2\alpha t_1}\norm{C^TC},
            \end{align*} 
            and
            \begin{align} \notag
            \norm{\bar{Q}_i^S\left(\bar{Q}_i^S\right)^T} &= \Big\Vert e^{A^Tt_i}\left[\mathcal{N}_T(I_m\otimes \bar{Q}_{i-1}^S) \quad \mathcal{M}(I_p\otimes \bar{P}_{i-1})\right] 
            \begin{bmatrix}(I_m\otimes \left(\bar{Q}_{i-1}^S\right)^T) \mathcal{N}_T^T \\ (I_p\otimes \bar{P}_{i-1}^T)\mathcal{M}^T\end{bmatrix}e^{At_i}\Big\Vert\\
            &=\Big\Vert e^{A^Tt_i}\left[\mathcal{N}_T(I_m\otimes \bar{Q}_{i-1}^S\left(\bar{Q}_{i-1}^S\right)^T) \mathcal{N}_T^T +\mathcal{M}(I_p\otimes \bar{P}_{i-1}\bar{P}_{i-1}^T)\mathcal{M}^T\right] e^{At_i}\Big\Vert  \notag \\
            &\leq \beta^2e^{-2\alpha t_i} \left(\norm{\mathcal{N}_T}^2 \norm{\bar{Q}_{i-1}^S\left(\bar{Q}_{i-1}^S\right)^T}+\norm{\mathcal{M}}^2 \norm{\bar{P}_{i-1}\bar{P}_{i-1}^T} \right)\label{eq:Qs0}\\
            &\leq \beta^2e^{-2\alpha t_i} \Gamma_{Q^S} 
            \left( \norm{\bar{Q}_{i-1}^S\left(\bar{Q}_{i-1}^S\right)^T}+ \norm{\bar{P}_{i-1}\bar{P}_{i-1}^T} \right)\label{eq:Qs1}\\
            & \leq \beta^2e^{-2\alpha t_i} \Gamma_{Q^S} 
            \norm{\bar{Q}_{i-1}^S\left(\bar{Q}_{i-1}^S\right)^T} + \beta^{2i}e^{-2\alpha(t_i+t_{i-1}+\dots+t_1)} \Gamma_{Q^S}^{i-1} 
            \norm{BB^T},  \label{eq:Qs2}  
            \end{align}
            where \cref{eq:Qs1} results from using the definition of $\Gamma_{Q^S}$ in \cref{eq:Qs0} and \cref{eq:Qs2} follow from using \cref{eq:norm_pi} in
            \cref{eq:Qs1}.
    Hence,
    \begin{align*}
      \norm{\bar{Q}^S_i\left(\bar{Q}_i^S\right)^T}       \leq& \beta^{2i}e^{-2\alpha(t_i+t_{i-1}+\dots+t_1)} \Gamma_{Q^S}^{i-1} 
      \notag \left((i-1)\norm{BB^T}+\norm{C^TC}\right).
        \end{align*}
This yields
\begin{align*}
    \norm{Q^S}\leq& \sum_{i=1}^\infty \int_0^\infty \dots \int_0^\infty \norm{\bar{Q}^S_i\left(\bar{Q}_i^S\right)^T}dt_1\dots dt_i\\
    \leq& \sum_{i=1}^\infty \int_0^\infty \dots \int_0^\infty \beta^{2i}e^{-2\alpha(t_i+t_{i-1}+\dots+t_1)} \Gamma_{Q^S}^{i-1}\\ 
     & \qquad \qquad \cdot \left((i-1)\norm{BB^T}+\norm{C^TC}\right) dt_1\dots dt_i\\
    \leq& \frac{1}{\Gamma_{Q^S}} 
    \sum_{i=1}^\infty \left( \frac{\beta^2\Gamma_{Q^S}}
    {2\alpha}\right)^i \left((i-1)\norm{BB^T}+\norm{C^TC}\right).
\end{align*}
        Since 
        $\beta^2\Gamma_{Q^S}/(2\alpha) <1$ (or, equivalently $\Gamma_{Q^S} < 2\alpha/\beta^2$) by the assumption, 
        this term is finite due to the convergence of the geometric series, completing the proof for the existence of $Q^s$. The statement (c) for the existence of $Q^A$ follows analogously to the proof for $Q^S$.
        
        For the proof of the existence of $Q^P$, we first recall the definition of the bilinear observability Gramian from \cref{eq:sumqi_B} and the expressions
        \begin{align*}
            \bar{Q}_1^B=e^{A^Tt_1}C^T, \quad \bar{Q}_i^B=e^{A^Tt_i}\mathcal{N}_T (I_m\otimes \bar{Q}_{i-1}^B).
        \end{align*}
        Analogously to before, we have
        \begin{align*}
            \norm{\bar{Q}_1^B\left(\bar{Q}_1^B\right)^T}&\leq \beta^2e^{-2\alpha t_1} \norm{C^TC},~\mbox{and}\\
            \norm{\bar{Q}_i^B\left(\bar{Q}_i^B\right)^T}&\leq \beta^{2i}e^{-2\alpha (t_i+\dots +t_1)}  \Gamma_{Q^P}^{i-1}  
            \norm{C^TC},
        \end{align*}
        which lead to 
        \begin{align*}
            \norm{Q^B}\leq& \sum_{i=1}^\infty \int_0^\infty \dots \int_0^\infty \norm{\bar{Q}^B_i\left(\bar{Q}_i^B\right)^T}dt_1\dots dt_i\\
            \leq& \sum_{i=1}^\infty \int_0^\infty \dots \int_0^\infty \beta^{2i}e^{-2\alpha(t_i+t_{i-1}+\dots+t_1)} \Gamma_{Q^P}^{i-1} 
            \norm{C^TC} dt_1\dots dt_i\\
            \leq& \frac{1}{\Gamma_{Q^P}} 
            \sum_{i=1}^\infty \left( \frac{\beta^2\Gamma_{Q^P}} 
            {2\alpha}\right)^i \norm{C^TC}.
        \end{align*} 
        Hence, $Q^B$ exists under the assumption on $\Gamma_{Q^P}$. Next, recall \cref{eq:sumqi_P}, and 
        \begin{align*}
            \bar{Q}_{1,j}= e^{A^Ts_1}\mathcal{M}(I_p\otimes \bar{P}_j), \quad \bar{Q}_{i,j}= e^{A^Ts_i}\mathcal{N}_T(I_m\otimes \bar{Q}_{i-1,j}).
        \end{align*}
        Now, we estimate the following terms as before:
        \begin{align*}
    \norm{\bar{Q}_{1,j}\left(\bar{Q}_{1,j}\right)^T}&\leq \beta^2e^{-2\alpha s_1} \Gamma_{Q^P} 
            \norm{\bar{P}_j\bar{P}_j^T}, \\
        \norm{\bar{Q}_{i,j}\left(\bar{Q}_{i,j}\right)^T}&\leq \beta^{2i}e^{-2\alpha (s_i+\dots +s_1)} \Gamma_{Q^P}^i  
            \norm{\bar{P}_j\bar{P}_j^T}.
        \end{align*}
        Then, using \cref{eq:norm_pi} for $i,j=1,2\dots$, we obtain
        \begin{align*}            
    \norm{\bar{Q}_{i,j}\left(\bar{Q}_{i,j}\right)^T}&\leq \beta^{2i}e^{-2\alpha (s_i+\dots +s_1)} \Gamma_{Q^P}^i 
        \beta^{2j}e^{-2\alpha(t_j+\dots+t_1)}\Gamma_{Q^P}^{j-1} 
            \norm{BB^T}.
        \end{align*}
        These expressions all together yield
        \begin{align*}
            \norm{\sum_{i,j=1}^\infty Q_{i,j}}\leq& \sum_{i,j=1}^\infty \int_0^\infty \dots \int_0^\infty \norm{\bar{Q}_{i,j}\left(\bar{Q}_{i,j}\right)^T}ds_1\dots ds_idt_1\dots dt_j\\
            \leq& \sum_{i=1}^\infty \int_0^\infty \dots \int_0^\infty \beta^{2i}e^{-2\alpha (s_i+\dots +s_1)}  \Gamma_{Q^P}^i  
            ds_1\dots ds_i \\
            &\qquad \cdot \int_0^\infty \dots \int_0^\infty\beta^{2j}e^{-2\alpha(t_j+\dots+t_1)}\Gamma_{Q^P}^{j-1} 
            \norm{BB^T} dt_1\dots dt_j\\
            \leq& \sum_{i=1}^\infty \left( \frac{\beta^2\Gamma_{Q_P}} 
            {2\alpha}\right)^i \cdot \frac{1}{\Gamma_{Q^P}} 
            \sum_{j=1}^\infty \left( \frac{\beta^2\Gamma_{Q^P}} 
            {2\alpha}\right)^j \norm{BB^T}.
        \end{align*}
        So, both of the two summands in \cref{eq:sumqi_P}  
        exists, thus also the observability Gramian $Q^P$, completing the proof. 
      \end{proof}

\begin{remark}
Note that,  
since 
        \begin{align*}
\norm{\mathcal{N}}^2=\norm{\mathcal{N}\mathcal{N}^T}\leq \sum_{k=1}^m\norm{N_k}^2,
        \end{align*}
in part (a) of \Cref{thm:PQexist},
instead of $\Gamma_P = 
\norm{\mathcal{N}}^2$, one can choose $\Gamma_P  
= \sum_{k=1}^m\norm{N_k}^2$ to obtain a similar bound as in {\cite{zhang_2002,gray_energy_1998,BenG24}} for the Gramian $P$. 
Analogously, one can replace $\norm{\mathcal{N}_T}^2$ and $\norm{\mathcal{M}}^2$, with 
$\sum_{k=1}^m\norm{N_k}^2$ and 
$\sum_{j=1}^p\norm{M_j}^2$, respectively in parts (b) and (c), which will be a looser upper bound.
    \end{remark}
Next, we focus on the existence and uniqueness of a symmetric positive semidefinite solution of the Lyapunov equations related to $P$, $Q^S$ and $Q^P$, and will show that under these conditions $Q^S=Q^P$ holds.

Recall that the reachability Gramian $P$ can be expressed as $P = \lim_{\ell \rightarrow \infty} \hat{P}_\ell$ where $\hat P_\ell$ solves the generalized Lyapunov equation \cref{eq:P_fixit}
    \[
    A\hat P_\ell +\hat P_\ell A^T + \sum_{k=1}^m N_k \hat P_{\ell-1}N_k^T + BB^T = 0 \quad \text{~for~} \ell \geq 2
    \]
    and $A\hat P_1 + \hat P_1 A^T +BB^T = 0$. Thus, $\hat P_\ell$ can be computed via a fixed-point iteration. Similarly, for the observability Gramian $Q^S = \lim_{\ell \rightarrow \infty} \hat{Q}^S_\ell$, the matrices $\hat Q^S_\ell$ can also be computed via a fixed-point iteration based on \eqref{eq:QS_hat}. In order to prove the existence and uniqueness of the solution of the Lyapunov equations for $P$ and $Q^S$, we will show that the above mentioned fixed-point iterations converge under a reasonable condition on $\mathcal{N}, \mathcal{N}_T, \mathcal{M}$ when $A$ is stable. 
    As a preparatory step, we recall 
    the following theorem  from \cite{damm_direct_2008,BenD11} on the uniqueness of the solutions to the generalized Lyapunov equations. {See also \cite[Theorem 3.6.1]{damm_rational_2004} for further equivalent conditions.} 
    \begin{theorem}{\cite[Theorem 4.1]{damm_direct_2008}\cite[Theorem 3.9]{BenD11}}\label{thm:damm}
        Consider $A\in\R^{n\times n}$ and linear operators $\mathcal{L}_A,\Pi:\R^{n\times n} \to \R^{n\times n}$ so that $\mathcal{L}_A(X):=AX+XA^T$ and $\Pi$ is nonnegative, i.e., $\Pi(X)\geq 0$ if $X\geq 0$. The following are equivalent:          
           \begin{enumerate}
            \item For every $Y>0$, there exists an  $X>0$ such that  $\mathcal{L}_A(X)+\Pi(X)=-Y$.
            \item There exist $Y>0$  and $X>0$ such that $\mathcal{L}_A(X)+\Pi(X)=-Y$.
            \item {There exist $Y\geq 0$ such that $(A,Y)$ is reachable and $X>0$ such that $\mathcal{L}_A(X)+\Pi(X)=-Y$.}
            \item $\sigma(\mathcal{L}_A+\Pi)\subset \C_-$.
            \item $\sigma(\mathcal{L}_A)\subset\C_-$ and $\rho(\mathcal{L}_A^{-1}\Pi)<1$.
        \end{enumerate}    
    \end{theorem}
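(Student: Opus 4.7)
The plan is to establish the five equivalences as a cycle (1)$\Rightarrow$(2)$\Rightarrow$(3)$\Rightarrow$(5)$\Rightarrow$(1), together with (4)$\Leftrightarrow$(5). The unifying observation is that when $A$ is stable, the Lyapunov operator $\mathcal{L}_A$ generates the positivity-preserving semigroup $e^{\mathcal{L}_A t}X = e^{At}Xe^{A^Tt}$, so $-\mathcal{L}_A^{-1}$ is \emph{resolvent-positive}, admitting the representation
\begin{equation*}
-\mathcal{L}_A^{-1}Y = \int_0^\infty e^{At}Ye^{A^Tt}\,dt \geq 0 \text{~for~} Y\geq 0.
\end{equation*}
Combined with nonnegativity of $\Pi$, the auxiliary operator $\mathcal{T} := -\mathcal{L}_A^{-1}\Pi$ is then a positive operator on the cone of positive semidefinite matrices. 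The whole proof is organized around the interplay between $\mathcal{T}$ and the positive cone via Perron--Frobenius type arguments.

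The easy directions are (1)$\Rightarrow$(2), which is immediate by specialization, and (2)$\Rightarrow$(3), which follows because $Y>0$ forces $(A,Y)$ to be reachable (the reachability matrix $[Y,AY,\ldots,A^{n-1}Y]$ has full row rank since $Y$ is invertible). For (5)$\Rightarrow$(1), I would invert via the Neumann series: stability of $A$ gives $A$ stable and $-\mathcal{L}_A^{-1}$ positive; combined with $\rho(\mathcal{T})<1$, for any $Y>0$ one gets
\begin{equation*}
X = (I-\mathcal{T})^{-1}\bigl(-\mathcal{L}_A^{-1}Y\bigr) = \sum_{k=0}^\infty \mathcal{T}^k\bigl(-\mathcal{L}_A^{-1}Y\bigr),
\end{equation*}
which solves $\mathcal{L}_A X + \Pi X = -Y$, and is positive definite because the leading term $-\mathcal{L}_A^{-1}Y$ is positive definite (by reachability of $(A,Y)$) and each subsequent term is positive semidefinite.

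The hard direction is (3)$\Rightarrow$(5). I would split it into two claims. First, $A$ is stable: rewrite the equation as $AX+XA^T = -(Y+\Pi X)$ with $X>0$, $Y+\Pi X\geq 0$, and since $(A,Y)$ is reachable so is $(A,Y+\Pi X)$ (the reachability subspace only grows), so the classical Lyapunov theorem gives stability of $A$. Second, $\rho(\mathcal{T})<1$: apply $-\mathcal{L}_A^{-1}$ to both sides to obtain $(I-\mathcal{T})X = -\mathcal{L}_A^{-1}Y =: Z > 0$, so the positive element $X$ strictly dominates its image $\mathcal{T}(X) = X - Z < X$. By Krein--Rutman applied to the positive operator $\mathcal{T}$ on the solid normal cone of positive semidefinite matrices, $\rho(\mathcal{T})$ is an eigenvalue with an eigenvector $V\geq 0$; pairing $V$ with $X$ in a suitable duality (or inductively applying $\mathcal{T}$ to the inequality $\mathcal{T}X \leq X - Z$) yields $\rho(\mathcal{T})<1$. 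This Krein--Rutman step on the PSD cone is the main obstacle, because the standard Perron--Frobenius theorem for entrywise-positive matrices does not apply directly and one must invoke the finite-dimensional version of the cone-theoretic theorem carefully.

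Finally, for (4)$\Leftrightarrow$(5), I would argue via the positivity-preserving semigroup $e^{(\mathcal{L}_A+\Pi)t}$, which preserves the PSD cone by the Trotter product formula since both $e^{\mathcal{L}_A t}$ and $e^{\Pi t}$ do. Under (4), this semigroup is exponentially stable, so integrating against any $Y>0$ yields $X = \int_0^\infty e^{(\mathcal{L}_A+\Pi)t}Y\,dt$, which is positive definite (the integrand at $t=0$ is $Y>0$) and solves $(\mathcal{L}_A+\Pi)X = -Y$, hence (1) and by the cycle also (5). Conversely, (5) yields (1), and then invertibility of $\mathcal{L}_A+\Pi$ together with the positivity-preserving semigroup forces $\sigma(\mathcal{L}_A+\Pi)\subset\mathbb{C}_-$; formally, the factorization $\mathcal{L}_A+\Pi = \mathcal{L}_A(I+\mathcal{L}_A^{-1}\Pi)$ and the Neumann expansion show the semigroup is summable, which is equivalent to (4).
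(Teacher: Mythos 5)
The paper does not prove this statement at all: it is quoted verbatim from \cite[Theorem 4.1]{damm_direct_2008} and \cite[Theorem 3.9]{BenD11} as a preparatory result, so there is no in-paper argument to compare against. Your sketch reconstructs the standard proof from that literature (resolvent positivity of $-\mathcal{L}_A^{-1}$, positivity of $\mathcal{T}=-\mathcal{L}_A^{-1}\Pi$ on the PSD cone, Neumann series for (5)$\Rightarrow$(1), and a Krein--Rutman/Perron--Frobenius pairing for (3)$\Rightarrow$(5)), and those parts are essentially sound. Two small remarks: in (3)$\Rightarrow$(5) the eigenvector $V\geq 0$ at $\rho(\mathcal{T})$ should be taken for the adjoint $\mathcal{T}^*$ (also cone-preserving by self-duality of the PSD cone), so that $\rho(\mathcal{T})\,\tr(VX)=\tr(V\,\mathcal{T}X)=\tr(VX)-\tr(VZ)<\tr(VX)$ with $\tr(VX)>0$; and the positive definiteness of $Z=-\mathcal{L}_A^{-1}Y$ for merely $Y\geq 0$ is exactly where reachability of $(A,Y)$ enters.

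The genuine gap is in your argument for $(5)\Rightarrow(4)$ (equivalently $(1)\Rightarrow(4)$). You write that invertibility of $\mathcal{L}_A+\Pi=\mathcal{L}_A(I-\mathcal{T})$, obtained from the Neumann expansion, ``shows the semigroup is summable, which is equivalent to (4).'' That deduction fails: invertibility of an operator says nothing about where its spectrum lies, and the Neumann series for $(I-\mathcal{T})^{-1}$ controls $\mathcal{T}^k$, not $e^{(\mathcal{L}_A+\Pi)t}$. The missing ingredient is the Perron--Frobenius property of resolvent-positive operators: because $e^{(\mathcal{L}_A+\Pi)t}$ preserves the PSD cone, the spectral bound $s(\mathcal{L}_A+\Pi)=\max\{\operatorname{Re}\lambda:\lambda\in\sigma(\mathcal{L}_A+\Pi)\}$ is itself a real eigenvalue, with a nonzero PSD eigenvector $V$ of the adjoint. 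Pairing $V$ against a solution $X>0$ of $(\mathcal{L}_A+\Pi)X=-Y$ with $Y>0$ gives $s(\mathcal{L}_A+\Pi)\tr(VX)=-\tr(VY)<0$, hence $s(\mathcal{L}_A+\Pi)<0$, which is (4). (Alternatively, one shows $\alpha I-\mathcal{L}_A-\Pi=(\alpha I-\mathcal{L}_A)\bigl(I-(\alpha I-\mathcal{L}_A)^{-1}\Pi\bigr)$ is invertible for every $\alpha\geq 0$ using monotonicity of $(\alpha I-\mathcal{L}_A)^{-1}$ in $\alpha$ and $\rho(-\mathcal{L}_A^{-1}\Pi)<1$, and again invokes realness of the spectral bound.) Without this cone-theoretic spectral-bound argument the cycle does not close through item (4).
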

    {The work \cite{BenD11} also states that the stability of $A$, the reachability of $(A,B)$, and some sufficiently small $N_k$'s guarantee a positive definite solution of the associated Lyapunov equation.}
    
    With this result, we can state and prove a theorem concerning the existence and uniqueness of the solution of the Lyapunov equations \eqref{eq:thm1a} for $P$  and \eqref{eq:thm1b} for $Q^S$.

    \begin{theorem}\label{thm:Layp-exist}
    Given is the BQO system \cref{eq:BQO} with stable $A$ as in \Cref{thm:PQexist}.
        \begin{itemize}
        \item[(a)] If $\Gamma_P <2\alpha/\beta^2$, then
        there exists a unique symmetric positive semidefinite solution $P\in\R^{n\times n}$ of \cref{eq:thm1a}, i.e., $AP+PA^T+\sum_{k=1}^mN_kPN_k^T+BB^T=0$. 
        Furthermore, if $(A,B)$ is reachable, then $P$ is positive definite.
        \item[(b)] If $\Gamma_{Q^S} <2\alpha/\beta^2$, then there exists a unique symmetric positive semidefinite solution  $Q^S\in\R^{n\times n}$ of \cref{eq:thm1b}, i.e., $A^TQ^S+Q^SA+\sum_{k=1}^mN_k^TQ^SN_k+\sum_{j=1}^p M_jPM_j+C^TC=0$.  
        Furthermore, if $(A^T,C^T)$ is reachable (equivalently if $(A,C)$ is observable), then $Q^S$ is positive definite. 
        \end{itemize}
    \end{theorem}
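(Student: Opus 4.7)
My plan is to invoke Theorem~\ref{thm:damm} (the Damm--Benner characterization) to obtain existence and uniqueness, and to use the convergent series representations $P = \lim_{\ell\to\infty}\hat P_\ell$ and $Q^S = \lim_{\ell\to\infty}\hat Q^S_\ell$ from \Cref{subsec:3-1,subsubsec:3-2} (whose existence is ensured by \Cref{thm:PQexist}) to obtain positive semidefiniteness.

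For part (a), define $\mathcal{L}_A(X) = AX + XA^T$ and $\Pi_P(X) = \sum_{k=1}^m N_k X N_k^T = \mathcal{N}(I_m\otimes X)\mathcal{N}^T$. The operator $\Pi_P$ is nonnegative because it is a congruence-type sum. Stability of $A$ gives $\sigma(\mathcal{L}_A) \subset \mathbb{C}_-$ and the explicit formula $\mathcal{L}_A^{-1}(-Y) = \int_0^\infty e^{At}Ye^{A^Tt}\,dt$, so with $\|e^{At}\|\leq \beta e^{-\alpha t}$ we get $\|\mathcal{L}_A^{-1}\| \leq \beta^2/(2\alpha)$ in operator norm. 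Combined with $\|\Pi_P\| \leq \|\mathcal{N}\|^2 = \Gamma_P$, the assumption $\Gamma_P < 2\alpha/\beta^2$ yields $\|\mathcal{L}_A^{-1}\Pi_P\| < 1$, hence $\rho(\mathcal{L}_A^{-1}\Pi_P)<1$. Thus condition~(5) of \Cref{thm:damm} holds, so $\mathcal{L}_A+\Pi_P$ is invertible, which gives uniqueness of any solution to \cref{eq:thm1a}. For existence and positive semidefiniteness, I would pass to the limit in \cref{eq:P_fixit}: each $\hat P_\ell$ is a sum of integrals of PSD rank-revealing products (cf.\ \cref{eq:Pi}), hence PSD, and \Cref{thm:PQexist}(a) guarantees $\hat P_\ell \to P$, so $P$ inherits symmetry and positive semidefiniteness and satisfies \cref{eq:thm1a}. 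Positive definiteness under reachability of $(A,B)$ follows from $P \geq P_1 = \hat P_1$ together with the standard fact that $\hat P_1 > 0$ when $(A,B)$ is reachable.

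Part (b) proceeds in parallel. Now define $\mathcal{L}_{A^T}(X) = A^TX+XA$ and $\Pi_Q(X) = \sum_{k=1}^m N_k^T X N_k = \mathcal{N}_T(I_m\otimes X)\mathcal{N}_T^T$, again nonnegative. Since $A^T$ has the same spectrum as $A$ and $\|e^{A^Tt}\| = \|e^{At}\|$, we still have $\|\mathcal{L}_{A^T}^{-1}\| \leq \beta^2/(2\alpha)$; and $\|\Pi_Q\| \leq \|\mathcal{N}_T\|^2 \leq \Gamma_{Q^S}$, so $\rho(\mathcal{L}_{A^T}^{-1}\Pi_Q)<1$ under the hypothesis and \Cref{thm:damm} gives invertibility of $\mathcal{L}_{A^T}+\Pi_Q$. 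The forcing term in \cref{eq:thm1b} is $\sum_{j=1}^p M_j P M_j + C^TC$, which is PSD because $P \geq 0$ from part~(a); this is the only place where the result of part~(a) is used in part~(b). Existence and PSD of $Q^S$ follow by passing to the limit in \cref{eq:QS_hat}, using that each $\hat Q^S_\ell$ is PSD by construction (cf.\ \cref{eq:QSi}) and that $\hat Q^S_\ell \to Q^S$ by \Cref{thm:PQexist}(b). Uniqueness follows from invertibility of $\mathcal{L}_{A^T}+\Pi_Q$. Finally, $Q^S \geq Q^S_1 = \hat Q^S_1$, where $\hat Q^S_1$ solves the standard linear Lyapunov equation $A^T\hat Q^S_1 + \hat Q^S_1 A + C^TC = 0$ and is therefore positive definite when $(A^T,C^T)$ is reachable, equivalently when $(A,C)$ is observable.

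The main obstacle I anticipate is a clean handling of the operator-norm bound and the jump from $\rho(\mathcal{L}_A^{-1}\Pi)<1$ to the statement that every PSD forcing yields a PSD solution via the operator $(\mathcal{L}_A+\Pi)^{-1}$; rather than re-deriving a Neumann-type positivity argument from \Cref{thm:damm}, I plan to side-step it by using the already-established convergent series $P=\sum_i P_i$ and $Q^S = \sum_i Q^S_i$ of PSD matrices to witness existence of a PSD solution, and to invoke \Cref{thm:damm} purely for uniqueness through invertibility of $\mathcal{L}_A+\Pi$. This split keeps the argument short and avoids duplicating the spectral machinery in \cite{damm_direct_2008,BenD11}.
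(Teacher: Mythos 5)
Your proposal is correct and follows essentially the same route as the paper: both establish $\rho(\mathcal{L}_A^{-1}\Pi)<1$ from the bound $\|\mathcal{L}_A^{-1}\Pi\|\leq \tfrac{\beta^2}{2\alpha}\|\mathcal{N}\|^2$, invoke \Cref{thm:damm} for uniqueness, and obtain existence and positive semidefiniteness from the convergent series of PSD terms $P=\sum_i P_i$, $Q^S=\sum_i Q^S_i$. The only cosmetic differences are that the paper argues PSD of each $P_i$ by induction on the recursion \cref{eq:P_rec} rather than directly from the integral form \cref{eq:Pi}, and derives positive definiteness under reachability from \Cref{thm:damm} rather than from your (equally valid and slightly more elementary) observation that $P\geq P_1>0$ and $Q^S\geq Q^S_1>0$.
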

    \begin{proof}
        We start with proving the statements for $P.$ Let $\mathcal{L}_A(X) = AX+XA^T$. 
        The fact $\sigma(\mathcal{L}_A)\subset \C_-$ follows directly from the stability of $A$.        
        The choice $\Pi(X)=\sum_{k=1}^mN_kXN_k^T$
        guarantees $\Pi(X)\geq 0$ for $X\geq 0$.         
        The equation $\mathcal{L}(X) + \Pi(X) = 0$ can be rewritten as a fixed-point equation $X =  -\mathcal{L}_A^{-1}(\Pi(X))$, which has a unique solution $X$ if  $\rho(\mathcal{L}_A^{-1}\Pi)<1$. Since $\rho(\mathcal{L}_A^{-1}\Pi)\leq\norm{\mathcal{L}_A^{-1}\Pi}$, it is sufficient to show that $\norm{\mathcal{L}_A^{-1}\Pi}<1$ holds under the assumptions. Now, for $X,Y\in \R^{n\times n}$ such that $\mathcal{L}_A^{-1}(\Pi(X))=Y$, we have
        \[
            Y=-\int_{0}^\infty e^{At}\left(\sum_{k=1}^m N_kXN_k^T \right) e^{A^Tt}dt.
            \]
            Using the techniques from the proof of \Cref{thm:PQexist}, we obtain
            \begin{align*}
            \norm{Y}\leq& \int_{0}^\infty \norm{e^{At}}\norm{\sum_{k=1}^m N_kXN_k^T } \norm{e^{A^Tt}}dt \\
            \leq& \frac{\beta^2}{2\alpha}\norm{\mathcal{N}}^2\norm{X}.
        \end{align*}
        Hence, it holds that $\norm{\mathcal{L}_A^{-1}\Pi}\leq \frac{\beta^2}{2\alpha}\norm{\mathcal{N}}^2$. Due to the assumption on $\Gamma_P$, this term is smaller than $1$ and thus $\rho(\mathcal{L}_A^{-1}\Pi)<1$ holds.        
        Using the facts $\sigma(\mathcal{L}_A)\subset\C_-$ and $\rho(\mathcal{L}_A^{-1}\Pi)<1$ and Theorem \ref{thm:damm}, it follows that $\sigma(\mathcal{L}_A+\Pi)\subset\C_-$. Hence, the equation $(\mathcal{L}_A+\Pi)(P)=-BB^T$ has a unique solution which can be computed by a fixed point iteration \cref{eq:P_fixit}. 

        The solution $P$ is  symmetric since $0=(\mathcal{L}_A(P)+\Pi(P)+BB^T)^T=\mathcal{L}_A(P^T)+\Pi(P^T)+BB^T$.  
        It is well known that, see, e.g., \cite{LanT85}, assuming $A$ is stable, the solution $P$ of $AP+PA^T+F=0$ is symmetric positive semidefinite 
        if $F\geq 0$.  
        With this result in mind, we investigate the recursive definition of $P$ in \cref{eq:P_rec}. We see directly that $P_1 \geq 0$
        if $BB^T\geq 0$.  
        Hence, we can use a factorization $P_1=L_1L_1^T$ in the equation for $P_2$ to obtain
        \begin{align*}
            0&=AP_2+P_2A^T+\sum_{k=1}^mN_kP_{1}N_k^T \\
            &= AP_2+P_2A^T+ \begin{bmatrix} N_1L_1 & \cdots & N_mL_1 \end{bmatrix} \begin{bmatrix} N_1L_1 & \cdots & N_mL_1 \end{bmatrix}^T.
        \end{align*}
        Thus, $P_2$ is symmetric positive semidefinite if $BB^T + \sum_{k=1}^m N_k P_1N_k^T$ is symmetric positive semidefinite and, by induction, so are $P_i$ and $P=\sum_{i=1}^\infty P_i$.
        Thus, if $BB^T$ is positive semidefinite, then $P_i$ and $P$ are positive semidefinite. {If now additionally $(A,B)$ is reachable, $P$ is even positive definite due to \cref{thm:damm}}.
        The results for $Q^S$ follow analogously with $\mathcal{L}_A = A^TX+XA$ and $\Pi(X) = \sum_{k=1}^mN_k^TXN_k$.  
    \end{proof}
    
    As we showed in \Cref{subsubsec:3-2}, the Gramians  $Q^S$ and $Q^P$ solve the same generalized Lyapunov equation~\cref{{eq:thm1b}}, which has a unique solution due to \cref{thm:Layp-exist}. This immediately yields the following result.
    \begin{corollary}
        Let $Q^S$ and $Q^P$ exist under the conditions of \Cref{thm:PQexist}. Then, they are equal to each other.  
    \end{corollary}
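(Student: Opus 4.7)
The plan is to combine the three ingredients already assembled earlier in the paper: first, both $Q^S$ and $Q^P$ exist under the stated hypotheses; second, they solve the same generalized Lyapunov equation \eqref{eq:thm1b}; third, that equation has a unique symmetric positive semidefinite solution. Once I verify that both Gramians actually lie in the class over which uniqueness is claimed, equality is immediate.

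First, I would record that \Cref{thm:PQexist} guarantees the existence of both $Q^S$ and $Q^P$ under the assumption $\Gamma_{Q^S}=\Gamma_{Q^P}<2\alpha/\beta^2$. Next, I would point out that \Cref{thm:thm2} asserts that $Q^S$ solves \eqref{eq:thm1b}, and that the discussion following \Cref{ourdef} (together with the MIMO generalization of \cite[Prop.~3.3.1, Prop.~3.3.2, Cor.~3.3.3]{padhi_2024}) shows the same for $Q^P$. Both Gramians are also symmetric positive semidefinite: $Q^S$ is the limit of the partial sums of the matrices $Q^S_i$ defined in \eqref{eq:QSi}, each of which has the form $\int \bar{Q}^S_i(\bar{Q}^S_i)^T$ and is hence symmetric positive semidefinite; the same reasoning applies to $Q^P=Q^B+\sum_{i,j}Q_{i,j}$ since $Q^B$ and each $Q_{i,j}$ are integrals of outer products.

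Now I would invoke \Cref{thm:Layp-exist}(b), which, under the hypothesis $\Gamma_{Q^S}<2\alpha/\beta^2$, states that \eqref{eq:thm1b} admits a \emph{unique} symmetric positive semidefinite solution. Applying this uniqueness to the two symmetric positive semidefinite solutions $Q^S$ and $Q^P$ of the same equation yields $Q^S=Q^P$.

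I do not anticipate a serious obstacle here, since all the heavy lifting has been done in \Cref{thm:thm2}, \Cref{thm:PQexist}, and \Cref{thm:Layp-exist}. The only subtle point worth verifying is that positive semidefiniteness is preserved in the limit of the partial sums (and that the partial sums themselves are positive semidefinite), but this follows because each summand is a Gram-type integral and the cone of symmetric positive semidefinite matrices is closed under addition and under passage to limits in the spectral norm, both of which are guaranteed by the existence proofs in \Cref{thm:PQexist}.
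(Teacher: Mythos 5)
Your proposal is correct and follows essentially the same route as the paper: both Gramians solve the same generalized Lyapunov equation \eqref{eq:thm1b}, which by \Cref{thm:Layp-exist}(b) has a unique symmetric positive semidefinite solution, so they coincide. Your extra check that each Gramian is symmetric positive semidefinite (as a sum/limit of Gram-type integrals) is a sensible detail the paper leaves implicit.
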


    \section{Approximation and truncation of Gramians}
    \label{sec:6}
The computation of the reachability Gramian $P$ and the observability Gramian $Q^S$ (or $Q^P$) require solving the generalized Lyapunov equations \cref{eq:thm1a,eq:thm1b}, namely
\begin{align*}
    AP + PA^T + \sum_{k=1}^m N_kPN_k^T + BB^T &= 0,\\
    A^TQ^S + Q^SA + \sum_{k=1}^m N_k^TQ^SN_k + \sum_{j=1}^pM_jPM_j + C^TC &= 0.
\end{align*}
Even though there have been many advances in methods to compute  (low-rank)
solutions of these generalized Lyapunov equations (see, e.g, \cite{BenB13,ShaSS16}), this is still a demanding task. In order to overcome this bottleneck, in \cite{BenGR17} the so-called truncated Gramians for bilinear systems have been introduced. Their advantage is that they can be computed by solving only standard Lyapunov equations. In particular, for the reachability Gramian $P$, it is suggested to consider only the first two summands $P_1$ and $P_2$ of the infinite sum $P = \sum_{\ell=1}^\infty P_\ell$ \eqref{eq:P_rec}; in other words, the truncated reachability Gramian, $P_T$, is defined as
\begin{equation}\label{eq:P_T}
P_T:=P_1+P_2.
\end{equation}
It is shown that $P_T$ satisfies the standard Lyapunov equation
\begin{equation} \label{eq:PTlyap}
AP_T+P_TA^T + \breve{B}\breve{B}^T = 0 \qquad \text{with} \quad \breve{B}\breve{B}^T = \sum_{k=1}^mN_kP_1N_k^T+BB^T,
\end{equation}
where $P_1$ is the solution of $AP_1 + P_1A^T+BB^T = 0$.
These results also follow easily from the proof of \Cref{thm:thm1}. Since $P_1$ is symmetric positive semidefinite, it suffices to compute its Cholesky factor $L_1$ such that $P_1 = L_1L_1^T$. Then in \cref{eq:PTlyap}, we have $\breve{B} = \begin{bmatrix} N_1L_1 & \cdots & N_mL_1 & B\end{bmatrix}$ . Low-rank modifications follow easily. The algorithm for computing $P_T$ is summarized in \cref{alg1}.

\begin{algorithm}
\caption{Computation of $P_T$}\label{alg1}
\begin{algorithmic}[1]
\Require $A, B, N_k$ as in \eqref{eq:BQO}
\Ensure Truncated Gramian $P_T$
\State Solve $AP_1+P_1A^T+BB^T=0$ for $P_1$.
\State Solve $AP_T+P_TA^T+\sum_{k=1}^m N_kP_1N_k^T+BB^T=0$ for $P_T$.
\end{algorithmic}
\end{algorithm}
Thus, instead of solving the generalized Lyapunov equation \eqref{eq:thm1a} for $P$ (e.g., via the fixed-point iteration \eqref{eq:P_fixit} by a series of standard Lyapunov equations), solving just two standard Lyapunov equations is required in order to determine its approximation $P_T.$ In case one needs to include additional summands $P_j, j = 3, 4, \dots$ of $P$ in order to achieve a more accurate approximate truncated Gramian $P_T$, for each summand $P_j$ an additional standard Lyapunov equation needs to be solved.

Next, we investigate the observability Gramian $Q^S$ \eqref{eq:sumqi} based {on the dual system with choice \cref{eq-dual-standard}.}  
Due to \eqref{eq_Lyap_Qi}, 
    and the choice of $P_T$ in \cref{{eq:P_T}}, we choose the truncated approximation to 
    $Q^S$ as 
    \begin{equation} \label{eq:Q_ST} Q^S_T:=Q^S_1+Q^S_2+Q^S_3 \end{equation} 
    since this allows us to make full use of the already determined $P_T.$  
    In most cases, including only the first two terms $Q^S_1$ and $Q^S_2$ is insufficient for capturing the complete dynamics.
    The resulting algorithm for computing $Q^S_T$ is summarized in \cref{alg2}. Instead of solving the generalized Lyapunov equation \eqref{eq:thm1b} for $Q^S$, solving three standard Lyapunov equations is required to determine its approximation $Q_T$. In case, one needs to include additional summands $Q^S_j, j = 3, 4, \dots$, for each summand $Q^S_j$ an additional standard Lyapunov equation needs to be solved.

\begin{algorithm}
\caption{Computation of $Q^S_T$}\label{alg2}
\begin{algorithmic}[1]
\Require $A, C,  N_k, M_j$ as in \eqref{eq:BQO} and $P_1$ and $P_T$ computed by Algorithm \ref{alg1}
\Ensure Truncated Gramian $Q^S_T$
\State Solve $A^TQ^S_1+Q^S_1A+C^TC=0$ for $Q^S_1$.
\State Solve $A^T\hat{Q}^S+\hat{Q}^SA+\sum_{k=1}^m N_k^TQ^S_1N_k+\sum_{j=1}^pM_jP_1M_j+C^TC=0$ \\ \qquad for $\hat{Q}^S=Q_1^S+Q_2^S$.
\State Solve $A^TQ_T^S+Q_T^SA+\sum_{k=1}^m N_k^T\hat{Q}^SN_k+\sum^p_{j=1}M_jP_TM_j+C^TC=0$ for $Q_T^S.$
\end{algorithmic}
\end{algorithm}

Recall that $Q^S = Q^P$ holds for $Q^P$ as in \eqref{eq:sumqi_P}. Using the infinite sum \eqref{eq:sumqi_P}, we obtain a different truncated observability Gramian $Q_T^P:= Q_1^B+Q_2^B+ Q_{1,1}+Q_{1,2}+Q_{2,1}+Q_{2,2}$. This has already been discussed in \cite{padhi_2024} and is summarized in \Cref{alg3}.   
\begin{algorithm}
\caption{Computation of $Q^P_T$}\label{alg3}
\begin{algorithmic}[1]
\Require $A, C,  N_k, M_j$ as in \eqref{eq:BQO} and  $P_T$ computed by Algorithm \ref{alg1}
\Ensure Truncated Gramian $Q^P_T$
\State Solve $A^T\hat{Q}^P+\hat{Q}^PA+\sum_{j=1}^pM_jP_TM_j+C^TC=0$ for $\hat{Q}^P$.
\State Solve $A^TQ_T^P+Q_T^PA+\sum_{k=1}^m N_k^T\hat{Q}^PN_k+\sum_{j=1}^pM_jP_TM_j+C^TC=0$ for $Q_T^P$.
\end{algorithmic}
\end{algorithm}
Thus, only two standard Lyapunov equations need to be solved to determine $Q_T^P$, one less than for the computation of $Q_T^S$. Therefore, Algorithm \ref{alg3} will always be faster than Algorithm \ref{alg2}.

Finally, we consider the observability Gramian $Q^A$ in \eqref{eq:sumqi_D}. This alternative Gramian $Q^A$ is the solution of a standard Lyapunov equation \eqref{eq:thm3} so there is no need to truncate it. However, using $P_T$ instead of $P$ in \eqref{eq:thm3} yields a solution which we refer to as $Q_T^A$ and $Q_T^A \neq Q^A$.
The resulting algorithm to determine $Q_T^A$ is summarized in Algorithm \ref{alg4}. There are no savings with respect to computational efforts. We observe that $Q^A_T=\hat{Q}^P$ in step 1 of \Cref{alg3}.

 \begin{algorithm}
\caption{Computation of $Q^A_T$}\label{alg4}
\begin{algorithmic}[1]
\Require $A, C, M_j$ as in \eqref{eq:BQO} and  $P_T$ computed by Algorithm \ref{alg1}
\Ensure Truncated Gramian $Q^A_T$
\State Solve $A^TQ^A_T+Q^A_TA+\sum_{j=1}^pM_jP_TM_j+C^TC=0$ for $Q^A_T$.
\end{algorithmic}
\end{algorithm}

    \section{Balanced truncation of BQO systems}\label{sec:7}
Now that we have defined the Gramians for BQO systems, we can apply
balanced truncation in a similar way as in the linear case. In this process, the original system is transformed such that the reachability and the observability Gramians are diagonal and equal.
By keeping only the most important states (i.e., easiest to reach and easiest to observe), we obtain a reduced-order model. Theory for linear balancing and balanced truncation can be found, e.g., in \cite{antoulas_book_2005,BenB17,BreS21,GugA04}, or \cite[Section 7 and 8]{benner_modellreduktion_2024}. 

    To do this, we first need a suitable definition of (un)reachable/(un)observable states. 
    The energy functionals for nonlinear systems are no longer quadratic as in the linear case (see, e.g., \cite[Section 2]{benner_lyapunov_2011}). The reachability functional for a nonlinear system $\dot{x}(t)=f(x(t))+g(x(t))u(t), \, y(t)=h(x(t))$ is defined as
    \begin{align}
        L_c(x_0)= \min_{\substack{u\in L^2(-\infty,0], \\ x(-\infty)=0, \, x(0)=x_0}}
        \frac{1}{2}\int_{-\infty}^0\norm{u(t)}^2dt \label{eq:contr_funct}
    \end{align}
    with $x_0\in\R^n$ and the observability functional is defined as
    \begin{align}
        L_o(x_0)=\max_{\substack{u\in L^2[0,\infty)], \norm{u}_{L^2}\leq\alpha \\ x(0)=x_0,\, x(\infty)=0}}
        \frac{1}{2}\int_0^\infty\norm{y(t)}^2dt \label{eq:obs_funct}
    \end{align}
    with a fixed parameter $\alpha\geq 0$; see, e.g., \cite[Definition 2.1]{gray_energy_1998}. We call states $x_0\in\R^n$ with $L_c(x_0)=\infty$ unreachable and with $L_o(x_0)=0$ unobservable, respectively. The choice of $\alpha$ in \cref{eq:obs_funct} allows for different approaches to study observability of BQO systems. For $\alpha >0$, we investigate the so-called \emph{homogeneous BQO system} corresponding to \cref{eq:BQO}, namely
    \begin{subequations}\label{eq:BQO_hom}
    \begin{align}
    \dot{x}(t) &= Ax(t) + \sum_{k=1}^m N_k x(t) u_k(t),\qquad x(0)=0, \label{eq:BQO_hom_1}\\
    y(t)&= Cx(t)+\begin{bmatrix}
        x(t)^TM_1x(t) \\ \vdots \\ x(t)^TM_px(t)
    \end{bmatrix}.\label{eq:BQO_hom_2}
    \end{align}
    \end{subequations}
    This is similar to the homogeneous bilinear system in \cite[p.695]{benner_lyapunov_2011}.
    Another approach follows \cite[Definition 3.1]{scherpen_balancing_1993} where $\alpha=0$. This leads to an input $u\equiv 0$, so we do not need to rely on the homogeneous system \cref{eq:BQO_hom} and can work with the original system \cref{eq:BQO}. These different approaches for the observability functional are investigated in the next theorem, which extends
    \cite[Theorem 3.3.4]{padhi_2024} to the MIMO setting and to the newly proposed Gramians. 
    
    \begin{theorem}\label{thm:nullPQ} 
        \begin{enumerate}[label=(\alph*)]
            \item Consider the BQO system \cref{eq:BQO} and let $P\geq 0$ exist as a solution to \cref{eq:thm1a}. If $x_0\in\ker(P)$, then $x_0$ is unreachable regarding the reachability functional in \cref{eq:contr_funct}.
            \item Consider a homogeneous BQO system \cref{eq:BQO_hom} and let $P\geq 0$ and $Q^S\geq 0$ exist and be solutions to \cref{eq:thm1a,eq:thm1b}. If $P>0$ and $x_0\in\ker(Q^S)$, then $x_0$ is unobservable regarding the observability functional in \cref{eq:obs_funct} with $\alpha\geq0$.
            \item Consider the BQO system \cref{eq:BQO} with zero input $u\equiv 0$ and let $P\geq 0$ and $Q^S\geq 0$ exist and be solutions to \cref{eq:thm1a,eq:thm1b}. If $P>0$ and $x_0\in\ker(Q^S)$, then $x_0$ is unobservable regarding the observability functional in \cref{eq:obs_funct} with $\alpha=0$.
            \item Consider the BQO system \cref{eq:BQO} with zero input $u\equiv 0$ and let $P\geq 0$ and $Q^A\geq 0$ exist and be solutions to \cref{eq:thm1a,eq:thm3}. If $P>0$ and $x_0\in\ker(Q^A)$, then $x_0$ is unobservable regarding the observability functional in \cref{eq:obs_funct} with $\alpha=0$.
        \end{enumerate}
    \end{theorem}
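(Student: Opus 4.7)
The plan is to use two distinct mechanisms: a Volterra-range argument for part~(a), and a Lyapunov-differentiation argument---combined with Gronwall's lemma in part~(b)---for the three observability statements. The common idea in (b)--(d) is to express $x_0^T Q x_0$, or a dissipation inequality for $\varphi(t)=x(t)^T Q x(t)$, as an integral of nonnegative quadratic forms that individually control the components of $y(t)$.

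For part~(a), since \eqref{eq:BQO_a1} is purely bilinear (the quadratic term enters only in the output), the reachable subspace coincides with that of the underlying bilinear system treated in \cite{gray_energy_1998,BenG24}. The Volterra expansion $x(t)=\sum_i x_i(t)$ via \eqref{eq:xi_expr} shows that every reachable state lies in the linear span of the columns of the $\bar{P}_i$, which is contained in $\Ran(P)$. Hence for $x_0\in\ker(P)=\Ran(P)^\perp$ with $x_0\neq 0$, no finite-energy input can steer $x(-\infty)=0$ to $x(0)=x_0$, so $L_c(x_0)=\infty$.

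For parts~(c) and~(d), the choice $u\equiv 0$ reduces the state equation to $\dot x=Ax$, so $x(t)=e^{At}x_0\to 0$ by stability of $A$. Differentiating $\varphi(t)=x(t)^T Q x(t)$ and substituting \eqref{eq:thm1b} for $Q=Q^S$ or \eqref{eq:thm3} for $Q=Q^A$ gives a sign-definite expression; integrating on $[0,\infty)$ yields
\[
x_0^T Q x_0 \;=\; \int_0^\infty\Bigl(\|Cx(t)\|^2+\textstyle\sum_j \|P^{1/2}M_j x(t)\|^2\Bigr)\,dt \;+\;(\text{extra }\|(Q^S)^{1/2}N_k x\|^2\text{ terms in the }Q^S\text{ case}),
\]
with all integrands nonnegative. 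If $x_0\in\ker(Q)$, the left side vanishes, forcing $Cx(t)=0$ and $P^{1/2}M_j x(t)=0$ for a.e.\ $t$; positivity $P>0$ upgrades the latter to $M_j x(t)=0$, and hence $y(t)=Cx(t)+[x(t)^T M_j x(t)]_j\equiv 0$, i.e., $L_o(x_0)=0$.

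For part~(b) the input is nontrivial. Differentiating $\varphi(t)=x(t)^T Q^S x(t)$ along the homogeneous trajectory $\dot x=Ax+\sum_k N_k x u_k$ and applying \eqref{eq:thm1b} together with the square-completing identity
\[
\textstyle 2\sum_k u_k\, x^T N_k^T Q^S x - \sum_k x^T N_k^T Q^S N_k x \;=\; \|u\|^2 \varphi - \sum_k (u_k x-N_k x)^T Q^S(u_k x-N_k x)
\]
gives
\[
\varphi'(t)=-\|Cx\|^2-\textstyle\sum_j \|P^{1/2}M_j x\|^2-\sum_k (u_k x-N_k x)^T Q^S(u_k x-N_k x)+\|u\|^2 \varphi(t).
\]
Hence $\varphi'(t)\leq \|u(t)\|^2 \varphi(t)$, and since $\|u\|_{L^2}\leq \alpha<\infty$, Gronwall's lemma yields $\varphi(t)\leq \varphi(0)\,e^{\alpha^2}$. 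For $x_0\in\ker(Q^S)$ we have $\varphi(0)=0$, so $\varphi\equiv 0$ and thus $\varphi'\equiv 0$; reading the above identity as a sum of nonpositive terms each equal to zero, and invoking $P>0$, we again obtain $Cx\equiv 0$ and $M_j x\equiv 0$, so $y\equiv 0$ and $L_o(x_0)=0$. The main obstacle I expect is the algebraic bookkeeping in~(b): the critical step is the exact cancellation that reorganises the cross term $2\sum_k u_k x^T N_k^T Q^S x$ into a perfect square plus the controllable drift $\|u\|^2 \varphi$, which is the only place where the summand $\sum_k N_k^T Q^S N_k$ of the $Q^S$-Lyapunov equation is used essentially. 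In all of (b)--(d), the passage from $\|P^{1/2}M_j x\|\equiv 0$ to $M_j x\equiv 0$ is where the strict positive definiteness of $P$ is indispensable---mere semidefiniteness would not suffice.
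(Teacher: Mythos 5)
Your proof is correct, but for the observability statements (b)--(d) it takes a genuinely different route from the paper's. The paper argues entirely at the level of subspaces: sandwiching the Lyapunov equation \cref{eq:thm1b} (resp.\ \cref{eq:thm3}) with $x_0\in\ker(Q)$ and using positive semidefiniteness of each summand yields the inclusions $\ker(Q)\subseteq\ker(C)$, $M_j\ker(Q)\subseteq\ker(P)$, $A\ker(Q)\subseteq\ker(Q)$, and (for $Q^S$ only) $N_k\ker(Q^S)\subseteq\ker(Q^S)$; these show that $\ker(Q)$ is invariant under the homogeneous (or zero-input) flow, whence $x(t)\in\ker(Q)\subseteq\ker(C)$ and $M_jx(t)=0$ by $P>0$, so $y\equiv 0$. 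You instead track the scalar $\varphi(t)=x(t)^TQx(t)$: your square-completion identity is correct (expanding $\sum_k(u_kx-N_kx)^TQ^S(u_kx-N_kx)$ reproduces exactly the cross term together with the $\sum_k N_k^TQ^SN_k$ contribution), the resulting differential inequality $\varphi'\le\norm{u}^2\varphi$ plus Gronwall forces $\varphi\equiv 0$, and reading the identity as a sum of nonpositive terms recovers $Cx\equiv0$ and, via $P>0$, $M_jx\equiv0$. Both proofs extract the same information from the Lyapunov equation, but yours replaces the paper's somewhat informal ``$\ker(Q^S)$ is invariant under the dynamics'' step by a self-contained analytic estimate and yields the quantitative bound $\varphi(t)\le\varphi(0)e^{\alpha^2}$ for arbitrary admissible inputs, while the paper's version makes the geometric structure (which states may be truncated) more transparent, which is what the balancing interpretation in \Cref{sec:7} uses. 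Part (a) is handled essentially identically in both (reduction to the bilinear reachability result of \cite{benner_lyapunov_2011}), and your observation that the extension to $\alpha>0$ works only for $Q^S$ --- in your language, the perfect square cannot be completed for $Q^A$ because its Lyapunov equation lacks the $\sum_k N_k^TQ^AN_k$ term --- matches the paper's remark that $N_k\ker(Q^A)\not\subseteq\ker(Q^A)$ in general.
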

    \begin{proof}
        \begin{enumerate}[label=(\alph*)]
            \item This fact has already been proven for bilinear systems in \cite[Theorem 3.1]{benner_lyapunov_2011} and can be applied here directly.
            \item The proof of this statement follows the one of
            \cite[Theorem 3.1]{benner_lyapunov_2011}. 
            Let $x_0\in \ker(Q^S)$. Multiplying \cref{eq:thm1b} from both sides by $x_0^T$ and $x_0$, respectively, yields 
            \[0=x_0^T\left(\sum_{k=1}^mN_k^TQ^SN_k+\sum_{j=1}^pM_jPM_j+C^TC\right)x_0.\] Hence, $x_0^TC^TCx_0 =0$, which implies $\ker(Q^S) \subseteq \ker(C)$. 
            Similarly, we see that $N_k\ker(Q^S)\subseteq \ker(Q^S)$ and $M_j\ker(Q^S) \subseteq \ker(P)$ holds. Moreover, multiplying \cref{eq:thm1b} only from the right side with $x_0$ yields $A\ker(Q^S)\subseteq \ker(Q^S)$.            
            It follows {for the homogeneous system \cref{eq:BQO_hom}} that if $x(t)\in\ker(Q^S)$, then \[
            Q^S\dot{x}(t)=Q^SAx(t)+\sum_{k=1}^mQ^SN_kx(t)u_k(t)=0,
            \]
            i.e., $\dot{x}(t)\in\ker(Q^S)$. Thus, $\ker(Q^S)$ is invariant under the dynamics, i.e., $x_0\in\ker(Q^S) \Rightarrow x(t)\in\ker(Q^S)$. With the assumption $P>0$ and $M_j\ker(Q^S)\subseteq\ker(P)$, $M_j\ker(Q^S)=\{0\}$ holds. Due to this fact and the fact $\ker(Q^S) \subseteq \ker(C)$, it follows that $y(t)=0$ for $x(t)\in\ker(Q^S)$.
            \item This statement follows analogously to the proof of (b) above using $Bu\equiv 0$.
            \item Let $x_0\in\ker(Q^A)$. Multiplying \cref{eq:thm3} from both sides by $x_0^T$ and $x_0$, respectively, yields 
            \[
            0=x_0^T\left(\sum_{j=1}^pM_jPM_j+C^TC\right)x_0.
            \]
            As in the proof of (b) above we have $\ker(Q^A)\subseteq \ker(C),
            M_j\ker(Q^A)\subseteq \ker(P)$ and $A\ker(Q^A)\subseteq \ker(Q^A)$, but in general $N_k\ker(Q^A)\not\subseteq\ker(Q^A)$. Hence, the importance of $u\equiv 0$ becomes apparent because now $Q^A\dot{x}(t)=Q^AAx(t)=0$ holds for $x(t)\in\ker(Q^S)$. 
            The rest of the proof follows as before. 
        \end{enumerate}
    \end{proof}

   In contrast to \cite{padhi_2024}, Theorem \ref{thm:nullPQ} is formulated and proved here for the more general MIMO case.
    In addition, we emphasize the subtle ambiguity concerning the observability functional, and we provide an extended statement regarding $Q^A$ as well as the homogeneous case. {Furthermore, we note that a formulation as for $Q^S$ in (b) of \Cref{thm:nullPQ} is not possible for $Q^A$. This can be seen from the proof of (d) of \Cref{thm:nullPQ} where in general $N_k\ker(Q^A)\not\subseteq \ker(Q^A)$. If we now use the homogeneous system \cref{eq:BQO_hom} with a nonzero input $u$, we only obtain 
    \begin{align*}        
        Q^A\dot{x}(t)=Q^AAx(t)+\sum_{k=1}Q^AN_kx(t)u_k(t)= \sum_{k=1}Q^AN_kx(t)u_k(t)\neq 0
    \end{align*}
    for $x(t)\in\ker(Q^A)$ using $A\ker(Q^A)\subseteq \ker(Q^A)$.
    }
    
   The result of this theorem can be interpreted as follows. The observability Gramian $Q^A$ does not yield a statement regarding the observability functional in \cref{eq:obs_funct} when $\alpha >0$. So, it is not as versatile as $Q^S$ which is also applicable to the observability functional in \cref{eq:obs_funct} when $\alpha >0$. Hence, $Q^A$ does not contain as rich of an information as $Q^S$ for the balancing steps, which can also be observed numerically in \Cref{sec:8}.
   For similar analysis for linear systems with quadratic outputs and  for bilinear systems with linear outputs, we refer the reader to, respectively, \cite{benner_lqo_2022} and \cite{benner_lyapunov_2011}. 
    
    Using the result of \Cref{thm:nullPQ}, we observe that the states that lie in $\ker(P)$ or $\ker(Q)$ have no significant influence on the system dynamics.  This insight allows us to transfer the balancing approach from linear systems to BQO systems.     
In particular, theoretically we diagonalize the Gramians in a similar way and then truncate them to eliminate the less important states.

However, in practice, one does not perform full balancing (simultaneous diagonalization) followed by truncation since this requires performing ill-conditioned operations. Instead, one constructs the reduced BT model directly without full balancing. We follow the same approach here.  To begin, we compute the so-called square-root factors of the Gramians as $P=UU^T$ and $Q=LL^T$, using, for example, Cholesky decompositions or suitable low-rank approximations. In practice, one directly computes $U$ and $L$ without ever forming $P$ and $Q$. Based on these square-root factor, one performs balanced truncation as sketched in \Cref{alg5}.
    \begin{algorithm}
    \caption{\texttt{BT\_BQO}$(P,Q)$, Balanced Truncation for BQO systems with Gramians $P$ and $Q$}\label{alg5}
    \begin{algorithmic}[1]
    \Require $A, B, C, N_k, M_j$ as in \eqref{eq:BQO}, $P=UU^T\in\R^{n\times n}$, $Q=LL^T\in\R^{n\times n}$ and a reduced order $r\leq n$
    \Ensure $\hat{A}, \hat{B}, \hat{C}, \hat{N}_k, \hat{M}_j$ as in \eqref{eq:BQOr}
    \State Compute the SVD 
    \begin{align*}
        U^TL=Z\Sigma Y^T=\begin{bmatrix}
            Z_1&Z_2
        \end{bmatrix}\begin{bmatrix}
            \Sigma_1&\\& \Sigma_2
        \end{bmatrix}\begin{bmatrix}
            Y_1^T\\Y_2^T
        \end{bmatrix}
    \end{align*}
    with $Z_1,Y_1\in\R^{n\times r}$ and $\Sigma_1\in\R^{r\times r}$.
    \State Construct $W^T=\Sigma_1^{-1/2}Y_1^TL^T$ and $V=UZ_1\Sigma_1^{-1/2}$.
    \State Compute $\hat{A}=W^TAV, \hat{B}=W^TB, \hat{C}=CV, \hat{N}_k=W^TN_kV$ and $\hat{M}_j=V^TM_jV$. 
    \end{algorithmic}
    \end{algorithm} 

    \begin{remark} \label{rem:hsv}
        \Cref{alg5} uses the SVD of $U^TL$ with the square-root factors of the Gramians, $P=UU^T$ and $Q=LL^T$. We now compare $Q^S$ in \cref{eq:sumqi} with $Q^A$ in \cref{eq:sumqi_D} and their Lyapunov equations \cref{eq:thm1b} and \cref{eq:thm3} to observe that $Q^S\geq Q^A$ in the sense that $Q^S-Q^A\geq 0$, i.e., $Q^S-Q^A$ is symmetric positive semidefinite. Hence, starting from factorizations $Q^S=L_SL_S^T$ and $Q^A=L_AL_A^T$, we have $Q^S=Q^A+\tilde{L}\tilde{L}^T=\begin{bmatrix}L_A \tilde{L}\end{bmatrix}  \begin{bmatrix} L^T_A \\ \tilde{L}^T\end{bmatrix}$. Thus, we obtain 
        \begin{align*}
            \sigma_i(U^TL_A)\leq \sigma_i\left(\begin{bmatrix}U^TL_A & U^T\tilde{L}\end{bmatrix} \right) = \sigma_i(U^TL_S),
        \end{align*}
        for all the Hankel singular values (HSVs) $\sigma_i(U^TL_A)$ (the diagonal entries of $\Sigma$ in \Cref{alg5}). Hence, we obtain smaller Hankel singular values if we use the observability Gramian $Q^A$ instead of $Q^S$. An analogous result can be shown for the truncated Gramians, where $Q^S_T\geq Q^A_T$ and $Q^P_T\geq Q^A_T$.
    \end{remark}

\section{Numerical Experiments}\label{sec:8}
In this section, we illustrate the balanced truncation algorithm, \cref{alg5}, for BQO systems using several examples. We compare different combinations of reachability and observability Gramians, $P$, $Q^S$, $Q^P$, {$Q^M$,} and $Q^A$, respectively. First, we analyze the algorithm using the following combinations of full Gramians: 
\begin{itemize}
\item \texttt{BT\_BQO}$(P,Q^S)$: Using $P$ and $Q^S$, representing the standard approach.
\item \texttt{BT\_BQO}$(P,Q^A)$: Using $P$ and $Q^A$, as an alternative formulation.
{\item \texttt{BT\_BQO}$(P,Q^M)$: Using $P$ and $Q^M$, as a mixed formulation.}
\end{itemize}
Moreover,  we examine the use of truncated Gramians. We test:
\begin{itemize}
\item \texttt{BT\_BQO}$(P_T,Q_T^S)$: Using $P_T$ and $Q_T^S$, representing the standard approach;
\item \texttt{BT\_BQO}$(P_T,Q_T^P)$: Using $P_T$ and $Q_T^P$, following the method proposed in \cite{padhi_2024};
\item \texttt{BT\_BQO}$(P_T,Q_T^A)$: Using $P_T$ and $Q_T^A$, as an alternative formulation.
\end{itemize}

As pointed out in \cite{benner_lyapunov_2011,condon_nonlinear_2005},
rescaling the input variable $u$ can help to ensure the existence of the Gramians. 
In \cite{benner_lyapunov_2011} it is suggested to replace the bilinear state equation \cref{eq:BQO_a1} by one with scaled input $u\to\frac{1}{\gamma}u$, $0<\gamma<1$, which leads to the modified state-space realization:
\begin{subequations}
    \begin{align*}
    \dot{x}(t) &= Ax(t) + \sum_{k=1}^m (\gamma N_k) x(t) \left(\frac{1}{\gamma}u_k(t)\right)+ (\gamma B)\left(\frac{1}{\gamma}u(t)\right)\\
    &=Ax(t) +\sum_{k=1}^m \tilde{N}_k x(t) \tilde{u}_k(t)+ \tilde{B}\tilde{u}(t).
\end{align*}
\end{subequations}
Thus, the Lyapunov equations \cref{eq:thm1a,eq:thm1b,eq:thm3} need to be adapted as

        \begin{align*}  
        A\tilde{P}+\tilde{P}A^T+\gamma^2\sum_{k=1}^mN_k\tilde{P}N_k^T +\gamma^2BB^T =0,   \\         
        A^T\tilde{Q}^S+\tilde{Q}^SA+\gamma^2\sum_{k=1}^mN_k^T\tilde{Q}^SN_k+\sum_{j=1}^pM_j\tilde{P}M_j + C^TC=0,\\
        A^T\tilde{Q}^A+\tilde{Q}^A+\sum_{j=1}^pM_j\tilde{P}M_j + C^TC=0. 
   \end{align*}
By choosing a suitable $\gamma,$ the constant $\Gamma_P$ in Theorem \ref{thm:PQexist} will be smaller and the conditions for the new Gramians to exist are easier to hold.
We see that the Lyapunov equation for $\tilde{Q}^S$ above includes a scaling parameter $\gamma^2$ in front of the term $\sum_{k=1}^mN_k^T\tilde{Q}^SN_k$. This resembles the Lyapunov equation \cref{eq:thm3a}, which describes the Lyapunov equation for the mixed observability Gramian $Q^M$ with $\phi_k=\gamma$. Hence, we can interpret choosing the parameters $\phi_k$ for the Gramian $Q^M$ as an additional scaling which can be done independently of the usual scaling in $u$ with $\gamma$.

Our MATLAB implementation uses solvers from the MESS library to solve the standard and bilinear Lyapunov equations. Specifically, we apply \texttt{mess\_lyap} with default settings, and \texttt{mess\_lyapunov\_bilinear} with the following options: residual norm tolerance \texttt{opts.blyap.res\_tol}$=1e-8$, relative difference tolerance \linebreak[4] \texttt{opts.blyap.rel\_diff\_tol}$=1e-7$, and maximum iterations \texttt{opts.blyap.maxiter}$=50$ (see \cite{Mess}).
The code for the numerical examples is available on ZENODO ( \url{https://doi.org/10.5281/zenodo.15796902}). Computations were run on an Intel(R) Core(TM) i7-1255U CPU @ 1.70 GHz with 16 GB RAM using MATLAB 2024a.

\subsection{Nonlinear RC-example}
Our first example is a nonlinear RC circuit, discussed in \cite{bai_projection_2006,benner_lyapunov_2011}. This nonlinear system is approximated by a bilinear system via the Carleman bilinearization~\cite{rugh_nonlinear_1981}, which significantly increases the system dimension, so that the task of model order reduction appears. We use the original fully nonlinear dimension $200$, which leads to the BQO dimension of $n  = 200^2+200 = 40200$ after bilinearization. This is a SISO example where we choose the output as 
\[
y=[1,0,\dots,0]x+ \frac{1}{200^2}x^T\begin{bmatrix} I_{200} & 0 \\ 0 & 0    
\end{bmatrix}x.
\]
Hereby, the standard linear output from \cite{bai_projection_2006} is extended by a quadratic term. This quadratic term describes the root mean squared error of the original states (RMSE, see, e.g., \cite{reiter_werner2024}). Following the original source~\cite{bai_projection_2006} of this example, we choose a reduced order of $r=21$. (We test various orders later in \Cref{fig:10a}). To test the performance of the reduced models, we simulate the full model and all the reduced models with the input $u(t)=e^{-t}$ and compare the original output $y(t)$ with the corresponding reduced order outputs $y_r(t)$. 

For the first test, we use a scaling parameter of $\gamma =0.1$. Also, we use two different choices for the parameter $\phi$ in $Q^M$. Recall that $Q^M$ with $\phi =1$ corresponds to $Q^S$, and $\phi =0$  to $Q^A$. We choose $\phi =0.1$ and $\phi = 0.5$. The right plot in the top row of \Cref{fig:9} shows the normalized Hankel singular values (HSVs) (i.e., $\sigma_i(A)/\sigma_1(A)$ for the singular values of a matrix $A$), which (as expected) decrease more rapidly for smaller $\phi$, especially at higher indices. 
{This is consistent with \Cref{rem:hsv} because the largest HSVs are nearly identical, and thus, the decay of the normalized HSVs follows that of the original HSVs.}
The left plot in the top row in \Cref{fig:9} shows the relative output errors over time for $t\in[0,2]$. The methods \texttt{BT\_BQO}$(P,Q^S)$ and \texttt{BT\_BQO}$(P,Q^M)$  perform well, while \texttt{BT\_BQO}$(P,Q^A)$ shows noticeably worse results, suggesting it fails to capture the nonlinear dynamics effectively. Similar trends are seen in the bottom row of plots in \Cref{fig:9} for $\gamma =0.5$. But in this case, the HSVs decay more slowly, and the output error grows a little bit more over time. Notably, \Cref{tab:1} illustrates significant differences in overall computation times for constructing the reduced models: smaller $\gamma$ values lead to better-conditioned generalized Lyapunov equations (see \Cref{thm:PQexist}) and faster convergence. In contrast, $\phi$ mainly affects the $Q$-Gramian equation and has a smaller impact on runtime, even though smaller values of $\phi$ still lead to faster balanced truncation overall. For the special case of $\phi=0$, which corresponds to $Q^A$, we only need to solve a standard Lyapunov equation for the $Q$-Gramian. Thus, the computation time is reduced even further.

\begin{figure}[h]
\centering
    \begin{subfigure}[b]{1\textwidth}
    \centering
    \includegraphics[width=1\textwidth]{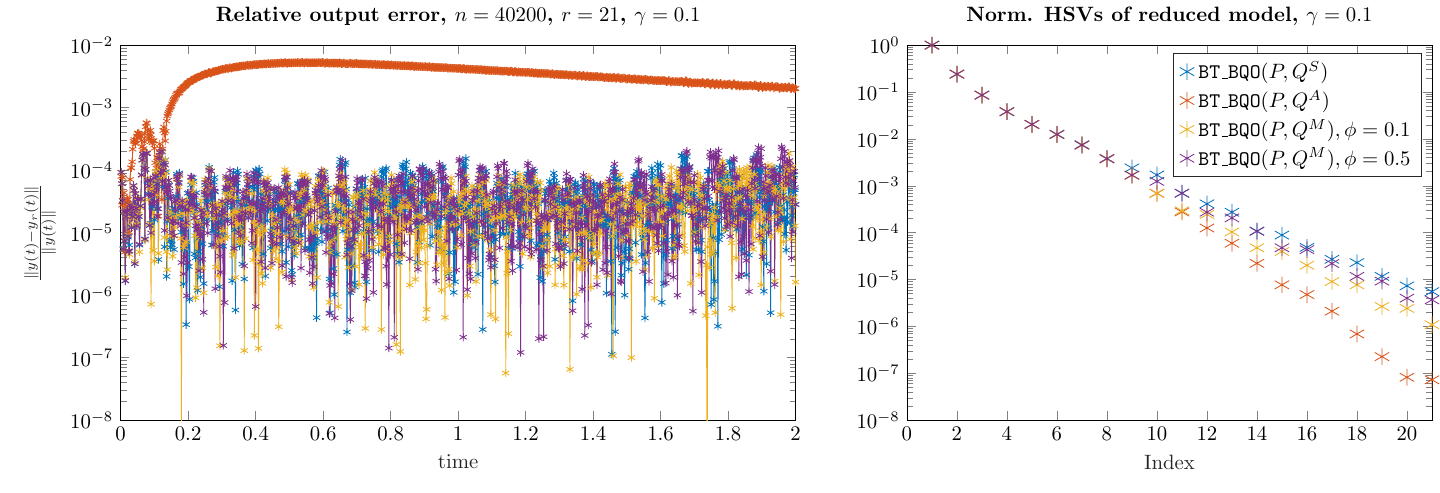}
    \end{subfigure}
    \begin{subfigure}[b]{1\textwidth}
    \centering
    \includegraphics[width=1\textwidth]{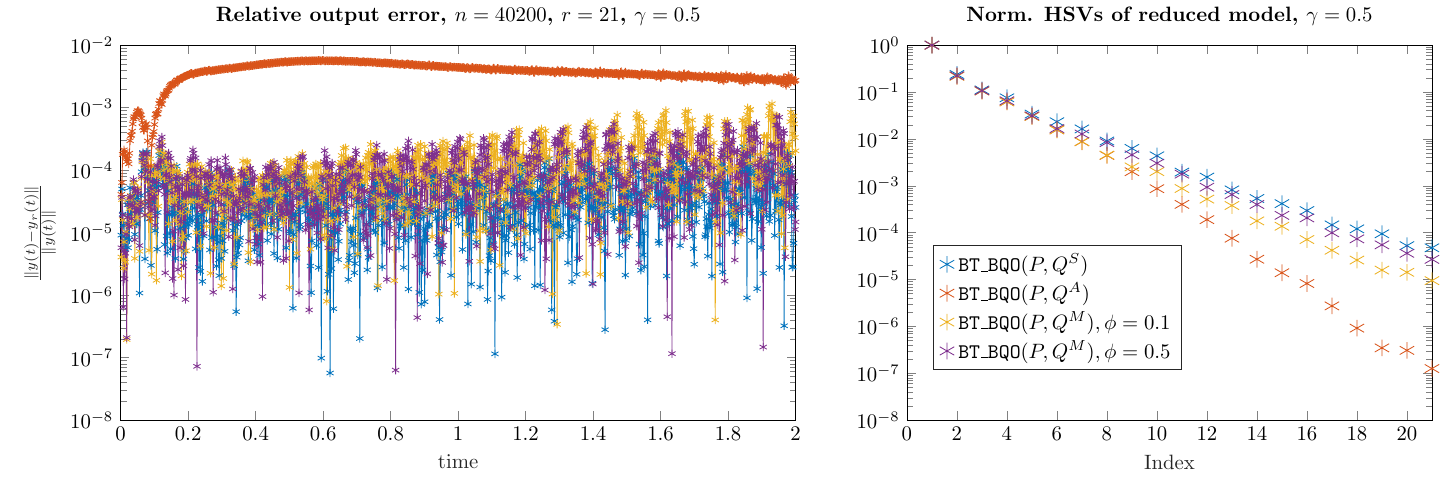}
    \end{subfigure}
    \caption{RC example, full Gramians using $\gamma=0.1$ (first row) and $\gamma=0.5$ (second row)}
    \label{fig:9}
\end{figure}

Next we present the results using the truncated Gramians, with $\gamma = 0.1$ and omitting the $Q^M$-Gramians. {As shown in \Cref{fig:10}, \texttt{BT\_BQO}$(P_T,Q_T^P)$ and \texttt{BT\_BQO}$(P_T,Q_T^S)$ yield almost the same approximation quality as \texttt{BT\_BQO}$(P,Q^S)$ in terms of the relative output error as they largely overlap. This also holds for \texttt{BT\_BQO}$(P_T,Q_T^A)$, which overlaps with \texttt{BT\_BQO}$(P,Q^A)$. We also observe that also the HSVs of the truncated variants match the HSVs of the methods using the truncated variants. Thus, various BT formulations with truncated Gramians performs closely to their regular BT versions with the original Gramians. 
As expected, \Cref{tab:1} shows that using truncated Gramians reduces computation time without sacrificing accuracy, as confirmed by the results in \Cref{fig:10}.

We revisit the output error, but now with an increasing order $r$. We use $N_t=10^3$ equidistant points $t_i,i=1,\dots,N_t$ for the time simulation over $t \in [0,2]$ and denote the discrete-time output of the original system at these points by $Y_i\approx y(t_i)\in\R^2$. Next, we consider the output error in the Frobenius norm as $\frac{\norm{Y-Y^{(r)}}_F}{\norm{Y}_F}$ with $Y=[Y_1, \dots, Y_{N_t}]\in\R^{2\times N_t}$, and similarly, $Y^{(r)}$ denotes the reduced system output. \Cref{fig:10a} shows the evolution of this output error with increasing orders of $r$. We observe again that the methods using the truncated Gramians approximate the methods using the full Gramians accurately. All the BT variants provide high-fidelity approximation except for those using $Q^A$ and its truncated version.

\begin{figure}[h]
    \centering
    \includegraphics[width=\textwidth]{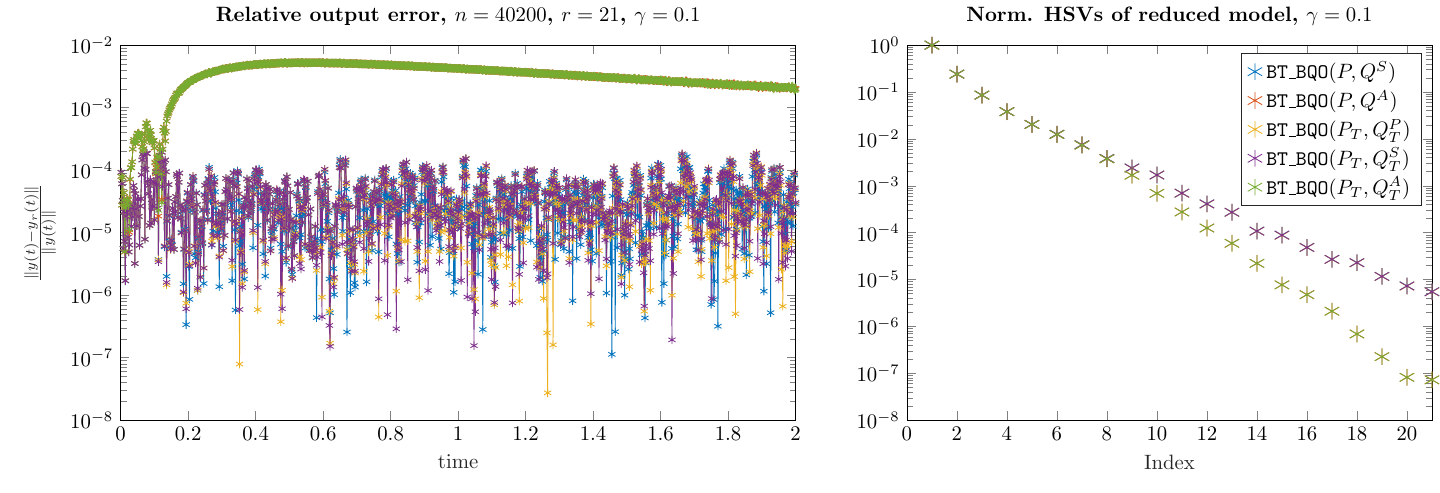}
    \caption{RC example, full vs truncated Gramians}
    \label{fig:10}
\end{figure}

\begin{figure}[h]
    \centering
    \includegraphics[width=0.8\textwidth]{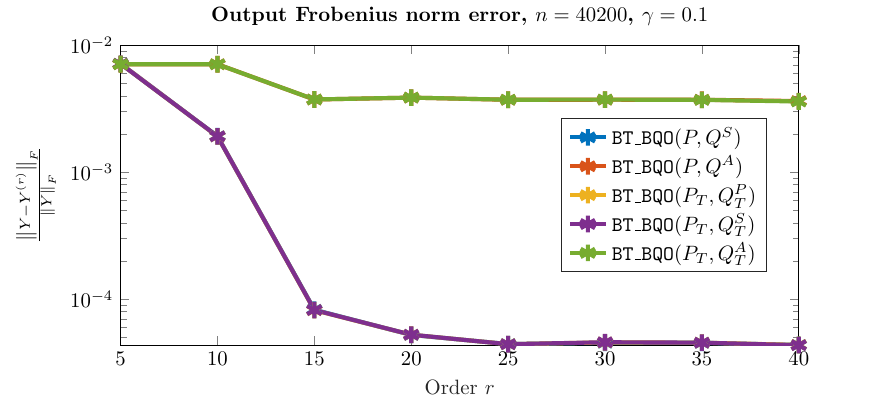}
    \caption{RC example, full vs truncated Gramians}
    \label{fig:10a}
\end{figure}

\begin{table}[htbp]
\footnotesize
\caption{RC example, Timings in sec for $r=21$}\label{tab:1}
\begin{center}
  \begin{tabular}{|c|c|c|c|c|c|c|c|} \hline
  $\gamma$  & \bf $P,Q^S$ & \bf $P,Q^A$ & \bf $P_T,Q_T^P$ & \bf $P_T,Q_T^S$ & \bf $P_T,Q_T^A$ & \bf $P,Q^M, \phi=0.1$ & \bf $P,Q^M, \phi=0.5$\\ \hline
  0.1   &139.6&70.2& 16.3&20.6 &12.2 &108.1 & 125.3 \\ \hline
    0.5   &569.0&254.0& 18.9 & 23.4&13.7 & 512.7& 714.0 \\
     \hline
  \end{tabular}
\end{center}
\end{table}

\subsection{Heat equation}
Our second example is based on a MIMO-example from \cite{benner_lyapunov_2011} describing a bilinear controlled heat transfer system with mixed Dirichlet and Robin boundary conditions:
\begin{align*}
    x_t= &\Delta x \text{ on } \Omega :=[0,1]^2,\\
    n\cdot\nabla x = &u_1(x-1) \text{ on } \Gamma_1:=\{0\}\times [0,1),\\
    n\cdot\nabla x = &u_2(x-1) \text{ on } \Gamma_2:=(0,1]\times\{0\},\\
    x= &0 \text{ on } \Gamma_3:=\{1\}\times (0,1] \text{ and } \Gamma_4:=[0,1)\times \{1\}.
\end{align*}
A finite difference discretization on a equidistant $k\times k$ grid leads to bilinear dynamics. We use $k=50$ and a zero initial condition.

For the output, we study first a system with 
\begin{align*}
    y_1(t)&=Cx(t),\\
    y_2(t)&=x(t)^TMx(t),
\end{align*}
where $C=\frac{1}{k^2}[1,\dots,1]$, $M=\frac{1}{k^4} \mathds{1}_{k^2}$ and $\mathds{1}$ as the matrix with all ones. The first component of the output mimics the average temperature as suggested in \cite{benner_lyapunov_2011} and the second component displays the outer product of all temperatures, {i.e., $\norm{y_1(t)}^2=y_2(t)$}. We perform balanced truncation to a reduced dimension $r=20$. We test the performance of the reduced models choosing $u_j(t)=\cos(j \pi t), j=1,2$ as for inputs and simulate the full and reduced dynamics over the time interval $t\in [0,5]$.

In \Cref{fig:11}, we observe again that the formulations using the Gramian $Q^A$ lack in accuracy compared to the other variants and possess smaller Hankel singular values. The approximated BT variants using the truncated Gramians lie on top of the full variants. With respect to the durations of the methods in \Cref{tab:2}, the variants with the truncated Gramians show again their advantage in needing much less computational effort. Moreover, as we stated earlier, the usage of $Q^A$ allows us to compute only a standard Lyapunov equation for the $Q$-Gramian, thus the computation time reduces.

\begin{figure}[h]
    \centering
    \includegraphics[width=\textwidth]{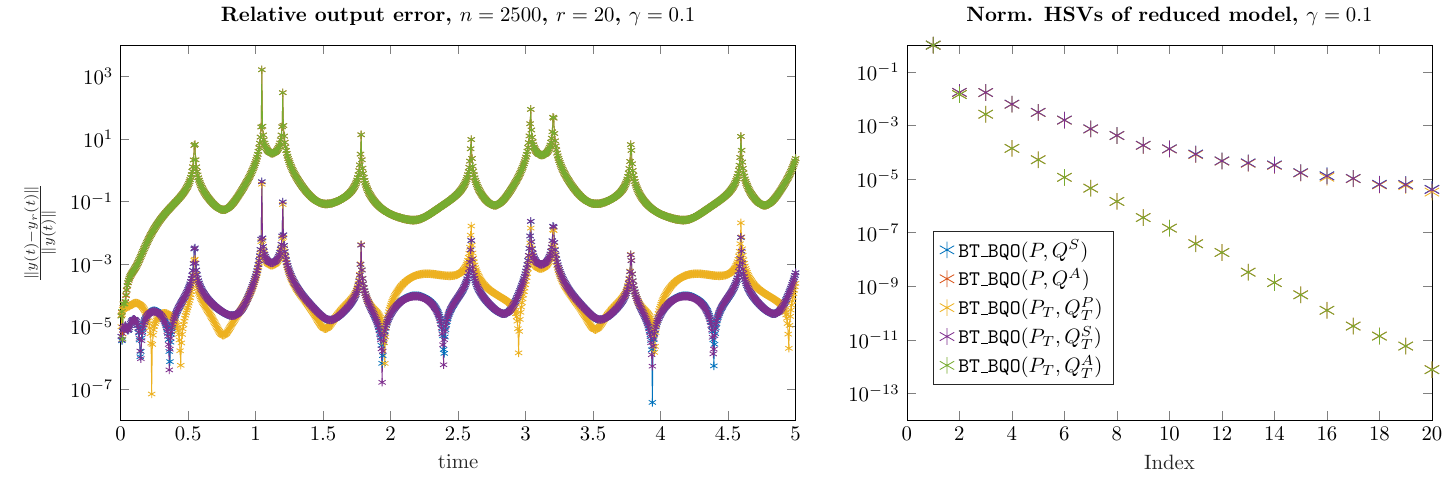}
    \caption{Heat equation example, full vs truncated Gramians, $M=\frac{1}{k^4}\mathds{1}_{k^2}$}
    \label{fig:11}
\end{figure}

Next, we use the average square temperature as the second output
$y_2(t)=x(t)^TMx(t)$ 
where $M=\frac{1}{k^2} I_{k^2}$. This represents the root mean squared error as before in the RC example. Furthermore, we note that the new quadratic output is larger than the one with $M=\frac{1}{k^4}\mathds{1}_{k^2}$, i.e.,  $x(t)^T\left(\frac{1}{k^2}I_{k^2}\right)x(t)\geq x(t)^T\left(\frac{1}{k^4}\mathds{1}_{k^2}\right)x(t)$.

\begin{figure}[h]
    \centering
    \includegraphics[width=\textwidth]{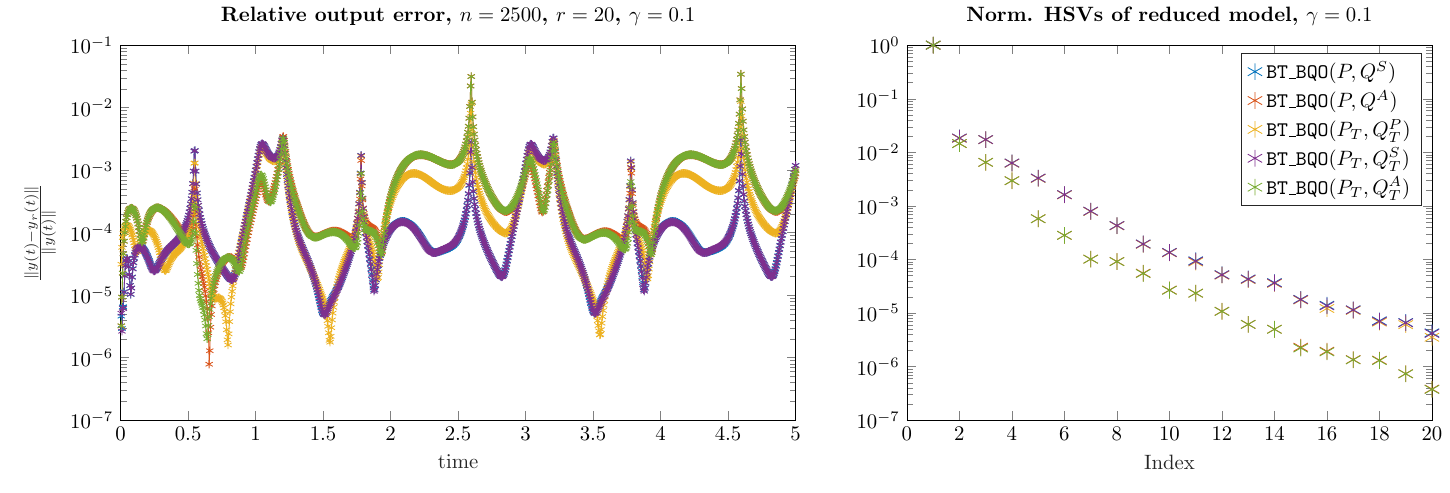}
    \caption{Heat equation example, full vs truncated Gramians, $M=\frac{1}{k^2}I_{k^2}$}
    \label{fig:12}
\end{figure}

Now, we see in the relative output error plot in \Cref{fig:12} that this new output, 
where there are less coupled states due to the diagonal structure of $M$,
enables us to alternatively use the dual formulation $Q^A$ as well. Nevertheless, it is still less accurate than the standard formulation. Regarding the HSVs, we also observe similar results as before, meaning the HSVs using the $Q^A$-Gramian provide smaller HSVs.

\begin{table}[htbp]
\footnotesize
\caption{Heat equation example, Timings in sec for $r=20$, $\gamma=0.1$}\label{tab:2}
\begin{center}
  \begin{tabular}{|c|c|c|c|c|c|} \hline
    & \bf $P,Q^S$ & \bf $P,Q^A$ & \bf $P_T,Q_T^P$ & \bf $P_T,Q_T^S$ & \bf $P_T,Q_T^A$ \\ \hline    
     $M=\frac{1}{k^4}\mathds{1}$ &15.1 & 9.2 & 1.7 & 2.4 & 1.6 \\
     $M=\frac{1}{k^2}I_{k^2}$ &15.8 &  8.7 & 1.9 & 2.1 & 1.0 \\
     \hline
  \end{tabular}
\end{center}
\end{table}

\Cref{fig:13} depicts the output error in the Frobenius norm with increasing orders of reduced dimension $r$ as we did for the heat example. We used again $N_t=10^3$ equidistant points for the time simulation over $t \in [0,2]$. As before, the methods using truncated Gramians yield a good approximation on the methods using the full Gramians. Also, the methods $\texttt{BT\_BQO}(P,Q^A)$ and $\texttt{BT\_BQO}(P_T,Q_T^A)$ show slightly worse errors than the other methods. Furthermore, we see a difference for higher orders where $\texttt{BT\_BQO}(P_T,Q_T^P)$ lacks slightly in accuracy compared to $\texttt{BT\_BQO}(P,Q^S)$ and $\texttt{BT\_BQO}(P_T,Q_T^S)$. This can be explained by the fact that in $\texttt{BT\_BQO}(P_T,Q_T^S)$, the additional Lyapunov solve provides better knowledge of the dynamics (see \Cref{sec:6}).

\begin{figure}[h]
    \centering
    \includegraphics[width=0.8\textwidth]{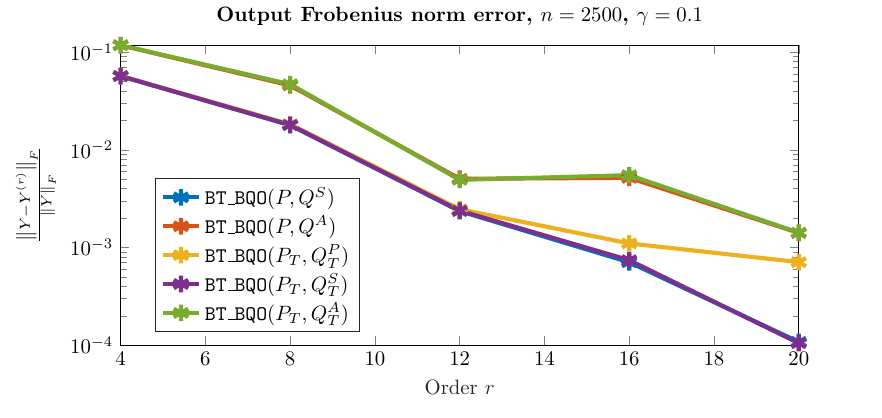}
    \caption{Heat equation example, full vs truncated Gramians, $M=\frac{1}{k^2}I_{k^2}$}
    \label{fig:13}
\end{figure}

 In short, we have seen in these numerical examples that the variants with the formulation of $Q^A$ or $Q^A_T$ work less accurately in most cases, but offer a quicker alternative 
 when the quadratic output has less coupled states.
 The use of truncated Gramians comes with a speedup and does not degrade the accuracy. {For higher reduced orders,  using $\texttt{BT\_BQO}(P_T,Q_T^P)$ may not be as accurate, which only requires two Lyapunov solves for the $Q$-Gramian}. Instead, one may consider $\texttt{BT\_BQO}(P_T,Q_T^S)$, which comes with an additional cost, but provides slightly better approximations (see \Cref{fig:13}).

\section{Conclusion}

In this paper, we studied BQO systems, generalizations of both bilinear and LQO systems. We derived and compared different reachability and observability Gramians by a new interpretation of the formulation for the dual system, which leads to alternative observability Gramians. These new observability Gramians can also be applied to bilinear systems in a similar way. In comparison to \cite{padhi_2024}, we derived the observability Gramian $Q^S$ using a different approach to that used for $Q^P$ in \cite{padhi_2024}. However, if they exist, they both fulfill the same generalized Lyapunov equations. All defined Gramians satisfy similar generalized Lyapunov equations when they exist.  But solving these generalized Lyapunov equations is often  challenging.
 
To address this issue, we proposed truncated variants that reduce the computational burden by requiring only a few standard Lyapunov equations to be solved.
We also introduced a balancing algorithm, analogous to that used for linear systems, and tested it with various Gramian combinations on two numerical examples, one SISO and one MIMO. The results show that the algorithm performs well in both cases. The truncated Gramians approximate the full ones effectively and offer improvements in computational efficiency.
Among the different observability Gramians, balanced truncation using the standard Gramian $Q^S$ outperforms the use of $Q^A$. For higher-order systems, the truncated Gramian $Q_T^P$ may not fully capture the system dynamics, making $Q_T^S$ a better alternative.

\section{Acknowledgements}
The third author warmly thanks Reetish Padhi for the inspiring and insightful discussions about his master's thesis. 

\section{Credit author statement}
 \textbf{Heike Faßbender: }Writing - Original Draft, Conceptualization, Supervision; \textbf{Serkan Gugercin: } Writing - Original Draft, Conceptualization, Methodology; \textbf{Till Peters:} Conceptualization, Methodology, Software, Investigation, Writing - Original Draft

\bibliographystyle{siamplain}
\bibliography{bqo_bt}

\end{document}